\documentclass[12pt,a4paper,twoside]{amsart}
\usepackage{amsmath}
\usepackage{amssymb}
\usepackage{amsfonts}
\usepackage{a4}

\newtheorem{theorem}{Theorem}[section]
\newtheorem{lemma}[theorem]{Lemma}
\newtheorem{proposition}[theorem]{Proposition}
\newtheorem{corollary}[theorem]{Corollary}

\theoremstyle{definition}
\newtheorem{definition}[theorem]{Definition}
\newtheorem{claim}[theorem]{Claim}

\newtheorem{conjecture}[theorem]{Conjecture}
\newtheorem{remark}[theorem]{Remark}

\newcommand{\mC}{{\mathbb C}}

\newcommand{\mF}{\mathbb F}

\newcommand{\bV}{\mathbf V}

\newcommand{\mZ}{{\mathbb Z}}

\newcommand{\bY}{{\mathbf Y}}
\newcommand{\bX}{{\mathbf X}}
\newcommand{\bP}{{\mathbf P}}
\newcommand{\bA}{{\mathbf A}}
\newcommand{\bB}{{\mathbf B}}
\newcommand{\bC}{{\mathbf C}}

\newcommand{\bG}{{\mathbf G}}
\newcommand{\bZ}{{\mathbf Z}}
\newcommand{\ho}{\hookrightarrow}

\newcommand{\D}{\Delta}

\newcommand{\kk}{\kappa}

\newcommand{\mcP}{\mathcal P}

\newcommand{\mcS}{\mathcal S}

\newcommand{\ti}{\tilde}

\newcommand{\emp}{\emptyset}

\newcommand{\Hom}{\mathrm{Hom}}
\newcommand{\Aut}{\mathrm{Aut}}

\newcommand{\sm}{\setminus}

\newcommand{\codim}{\operatorname{codim}}
\newcommand{\sms}{\smallskip}
\newcommand{\ms}{\medskip}

\newcommand{\fK}{{\mathbf k}}
\usepackage{xcolor}
\usepackage{amscd}

\begin{document} 
\title[Comparison of  the Schmidt and analytic ranks for trilinear forms] {On the Schmidt and analytic ranks for trilinear forms}

\author{Karim Adiprasito,  David Kazhdan and Tamar Ziegler}

\thanks{K. A. is supported by ERC grant 716424 - CASe and ISF Grant 1050/16. T. Z. is supported by ERC grant ErgComNum 682150 and ISF grant 2112/20. }

\maketitle

\begin{abstract} We discuss relations between different notions of ranks for multilinear forms. In particular we show that   
the Schmidt and the analytic ranks for trilinear forms are essentially proportional.
 \end{abstract}
\section{Introduction}

In recent years there is growing interest in properties of polynomials which are independent of the number of variables. 
To study such properties various notions were introduced for measuring the complexity of polynomials. 
In this paper we compare three notion of rank of polynomials $P$ over a field $\fK$ - the Schmidt rank  $r_\fK(P)$, the slice rank  $s_\fK(P)$ both defined for arbitrary fields $\fK$, and the analytic rank $a_\fK(P)$ defined for finite fields $\fK =\mF _q$.

\begin{definition}\label{11} Let $\fK$ be a field,  $\bV$ be a finite-dimensional k-vector space, $V=\bV (\fK)$, let 
$\mcS _{\fK} = \bigoplus _{d\geq 0}\mcS ^d_{\fK}$ be the graded algebra of polynomial functions  on $\bV$ and let
 $P \in \mcS ^d_\fK$.
\begin{enumerate}
\item The \emph{Schmidt rank} $r_\fK(P)$ is  the minimal number $r$ such that $P$ can be written in the form $P=\sum _{i=1}^rQ_iR_i$, where $Q_i,R_i \in \mcS $ are polynomials on $V$ of degrees $<d$.
\item The \emph{slice rank} $s_\fK(P)$ is the minimal number $r$ such that $P$ can be written in the form $P=\sum _{i=1}^rQ_iR_i$, where $\deg(Q_i)=1$.
\item In the case when  $\fK=\mF_q$ is a finite field and 
$\psi : \fK\to \mC ^\star$  a non-trivial additive character
we write $A_{\fK , \psi}( P):= \frac {\sum _{v\in V}\psi (P(v))}
{q^{ \mathrm{dim}(V)}}$. We define the {\it analytic rank} 
$a_{\fK ,\psi}(P): =- \log _q (| A_{\mF_q, \psi}(P) |)$ 
 \end{enumerate}
\end{definition}

 These three notions of complexity of polynomials play an important role in many problems in number theory, additive combinatorics, and algebraic geometry. 
The Schmidt rank, also called the $h$-invariant, was first introduced by Schmidt in his paper on integer points in varieties defined over the rationals. In his paper Schmidt showed that over the complex field the Schmidt rank of a polynomials is proportional to the codimension of the singular locus of the associated variety.  The same notion of complexity was reintroduced in the work of Ananyan and Hochster \cite{ah} as the {\em strength} of a polynomial and was used in  the proof of the Stillman conjecture. The notion of slice rank played an important role in the arguments  for the cap set problem in combinatorics  \cite{clp, eg, tt}. Finally the notion of analytic rank for polynomials over finite fields was introduced in Gower and Wolf  \cite{gw} as a tool for studying the CS-complexity of systems of linear equations. 

\begin{remark}
 It is clear that $r_\fK(P) \leq s_\fK(P) $ and   $r_\fK(P) = s_\fK(P) $ if $d\leq 3.$
\end{remark}

In this paper we consider only  the case of multilinear polynomials. Namely  $V= V_1\times \dots \times V_d$ is a product of $\fK$-vector spaces and  $P: V_1\times \dots \times V_d \to \fK$ is a multilinear polynomial. 

We introduce two additional definitions.
\begin{definition}\label{d1}
\begin{enumerate}
\item In the case when  $V= V_1\times \dots \times V_d$ is a product of $\fK$-vector spaces and
 $P: V_1\times \dots \times V_d \to \fK$ is a multilinear polynomial we define a $\fK$-subvariety  $\bZ _P \subset \bV _2\times \dots \times \bV _d $ by $\bZ _P :=\{(v_2,\dots ,v_d)\in 
 \bV _2\times \dots \times \bV _d | P(v_1, v_2,\dots ,v_d)=0, \forall v_1\in V_1\}$.
\item For  a subspace $L\subset (  V_1 \otimes \ldots \otimes  V_d)^\vee$ we define 
$r_\fK(L)$ as the minimum of $ \sum_{i=1}^d \mathrm{codim}(W_i)$ over the set of subspaces $W_i\subset V_i$, $i=1, \ldots, m$, such that $l_{| W_1\otimes  \ldots \otimes W_d} \equiv 0$ for all $l\in L$.
 \end{enumerate}
\end{definition}

\begin{claim}\label{3} For a multilinear polynomial 
$P: V_1\times \dots \times V_d \to \fK$ 
the slice rank  $s_{\fK} (P)$ is equal to the minimum of
 $\sum _{i=1}^d \codim _{V_i}(W_i) $ where 
$W_i\subset V_i,1\leq i\leq d$ are subspaces such that 
$P_{|W_1 \times \dots \times W_d  }\equiv 0$. It follows that if $L$ is one dimensional  $L=\fK P$ then $r_{\fK} (L)=s_{\fK} (P)$. 
\end{claim}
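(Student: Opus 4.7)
The plan is to prove the two inequalities $s_\fK(P) \leq m(P)$ and $m(P) \leq s_\fK(P)$, where $m(P)$ denotes the minimum of $\sum_{i=1}^d \codim_{V_i}(W_i)$ taken over tuples $(W_i)$ with $P|_{W_1 \times \dots \times W_d} \equiv 0$. The upper bound on $s_\fK$ is the content-bearing direction; the lower bound is routine once slice decompositions are placed in a multilinear normal form.

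For the upper bound $s_\fK(P) \leq m(P)$, I fix admissible $W_i$ of codimension $c_i$, pick complements $V_i = W_i \oplus U_i$, and choose a basis $e_{i,1}, \dots, e_{i,c_i}$ of $U_i$ with dual linear forms $l_{i,1}, \dots, l_{i,c_i}$ extended by zero on $W_i$. Expanding $P$ in a basis of $V_i$ extending the $e_{i,k}$, multilinearity gives $P = \sum_\alpha c_\alpha \prod_i x_{i,\alpha_i}$ with one factor per $V_i$, and the vanishing hypothesis forces $c_\alpha = 0$ whenever $\alpha_i > c_i$ for every $i$. Every surviving monomial is therefore divisible by some $l_{i,k}$ with $k \leq c_i$; grouping monomials by their first such pair $(i,k)$ produces
$$P \;=\; \sum_{i=1}^{d}\sum_{k=1}^{c_i} l_{i,k}\, N_{i,k}$$
with each $N_{i,k}$ multilinear in the remaining factors, a slice decomposition of length $\sum_i c_i$.

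For the lower bound $m(P) \leq s_\fK(P)$, I begin from a minimal slice decomposition $P = \sum_{i=1}^r Q_i R_i$ with $\deg Q_i = 1$, and first put it in normal form: because $P$ is multilinear, a projection argument on the identity $P = \sum Q_i R_i$ allows one to assume each $Q_i$ is a linear form supported on a single factor $V_{j(i)}$ and each $R_i$ is multilinear in the remaining factors. Grouping summands by the value of $j(i)$ produces linear forms $l_{j,1}, \dots, l_{j,r_j}$ on each $V_j$ with $\sum_j r_j \leq r$; setting $W_j := \bigcap_k \ker l_{j,k}$ gives $\codim_{V_j} W_j \leq r_j$, and every summand $l_{j,k}\, R$ vanishes on $W_1 \times \dots \times W_d$, hence so does $P$. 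This yields $m(P) \leq \sum_j r_j \leq r$.

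The final sentence of the claim follows at once: for $L = \fK P$, the vanishing condition in Definition \ref{d1}(2) reads $l|_{W_1 \otimes \dots \otimes W_d} \equiv 0$ for all $l \in L$, which is equivalent to $P|_{W_1 \times \dots \times W_d} \equiv 0$, so $r_\fK(L) = m(P) = s_\fK(P)$. The only step I expect to require actual care is the normal-form reduction for the lower bound: Definition \ref{11}(2) permits $Q_i$ to be a linear form on all of $V = V_1 \oplus \dots \oplus V_d$, and one must verify via the multilinear-component projection applied to $P = \sum Q_i R_i$ that such a decomposition can be converted, without growing the number of summands, into one whose $Q_i$ is supported on a single $V_{j(i)}$.
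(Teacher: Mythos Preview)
The paper states Claim~\ref{3} without proof, treating it as a routine observation about tensors, so there is no argument in the paper to compare against. Under the intended reading---slice rank for a multilinear $P$ taken in the tensor sense, i.e.\ each $Q_i$ a linear form on a single factor $V_{j(i)}$ and each $R_i$ multilinear on the remaining factors---your proof is correct and is the standard one; the final sentence about $r_\fK(L)$ then follows immediately from Definition~\ref{d1}(2).

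Your closing caveat is well placed, but the resolution you sketch does not work as stated. If Definition~\ref{11}(2) is read literally, with $Q_i$ an arbitrary linear form on $V_1\oplus\cdots\oplus V_d$, then the multilinear projection does \emph{not} preserve the number of summands: writing $Q_i=\sum_{j}Q_i^{(j)}$ and extracting the multi-degree $(1,\dots,1)$ component of $Q_iR_i$ gives $\sum_{j}Q_i^{(j)}R_i^{[j]}$, which is up to $d$ terms per original summand, so this argument yields only $m(P)\le d\cdot s_\fK(P)$, not $m(P)\le s_\fK(P)$. Reconciling the general polynomial slice rank with the tensor slice rank for multilinear $P$ is a separate (and less obvious) question that projection alone does not settle. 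The cleanest fix is simply to adopt the tensor definition from the outset---as the paper does implicitly, and as is required for the identity $s_\fK=r_\fK$ in degree~$3$ that the paper uses---in which case your normal-form step is vacuous and the rest of your argument goes through unchanged.
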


\begin{lemma}\label{ind} In the case when $\fK =\mF _q$ we have $A_{\fK , \psi }(P)=\frac{| \bZ _P (\mF_q)|} {q^{\sum _{i=2}^d \mathrm{dim}(V_i)}} $.
\end{lemma}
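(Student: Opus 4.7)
The plan is to compute the character sum $\sum_{v \in V}\psi(P(v))$ directly by grouping the variables: fix $(v_2,\dots,v_d)$ and sum over $v_1 \in V_1$ first. Because $P$ is multilinear, for each fixed tuple $(v_2,\dots,v_d)$ the map
\[
\ell_{v_2,\dots,v_d}: v_1 \longmapsto P(v_1, v_2, \dots, v_d)
\]
is a $\fK$-linear functional on $V_1$. The key observation is then the standard character orthogonality: for any linear functional $\ell : V_1 \to \mF_q$,
\[
\sum_{v_1 \in V_1} \psi(\ell(v_1)) = \begin{cases} q^{\dim V_1} & \text{if } \ell \equiv 0, \\ 0 & \text{otherwise,}\end{cases}
\]
since in the nonzero case $\psi \circ \ell$ is itself a nontrivial additive character of $V_1$.

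Next I would identify the set of $(v_2,\dots,v_d)$ for which $\ell_{v_2,\dots,v_d} \equiv 0$. By the definition of $\bZ_P$, this condition says exactly that $P(v_1,v_2,\dots,v_d)=0$ for all $v_1 \in V_1$, i.e.\ $(v_2,\dots,v_d) \in \bZ_P(\mF_q)$. Combining this with the character sum evaluation gives
\[
\sum_{v \in V} \psi(P(v)) = \sum_{(v_2,\dots,v_d)} \sum_{v_1 \in V_1} \psi(\ell_{v_2,\dots,v_d}(v_1)) = q^{\dim V_1} \cdot |\bZ_P(\mF_q)|.
\]

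Finally I divide by $q^{\dim V} = q^{\dim V_1} \cdot q^{\sum_{i=2}^d \dim V_i}$ to extract $A_{\fK,\psi}(P)$ and obtain the claimed formula. There is no serious obstacle here; the only thing worth noting is that, as a byproduct, $A_{\fK,\psi}(P)$ is a nonnegative rational number independent of the choice of nontrivial character $\psi$, so the subscript $\psi$ in the notation $a_{\fK,\psi}(P)$ is in fact superfluous for multilinear $P$.
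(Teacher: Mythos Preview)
Your proof is correct and follows essentially the same approach as the paper: fix $(v_2,\dots,v_d)$, observe that $v_1\mapsto \psi(P(v_1,v_2,\dots,v_d))$ is an additive character of $V_1$ that is trivial exactly when $(v_2,\dots,v_d)\in \bZ_P(\mF_q)$, and apply character orthogonality. Your write-up is in fact slightly more explicit than the paper's in carrying the computation through to the final division, and your closing remark about independence of $\psi$ is exactly what the paper records separately as a corollary.
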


\begin{proof} For $(v_2,\dots ,v_n)\in V_2\times \ldots \times V_d $ we define $\psi  _{v_2,\dots ,v_n }:V_1\to \mC ^\star$ by 
\[ \psi _{v_2,\dots ,v_n } (v_1):= \psi (P(v_1,v_2,\dots ,v_n)).
\]
 It is clear that $ \psi _{v_2,\dots ,v_n }$ is an additive character of $V_1$ which trivial if and only if $ (v_2,\dots ,v_n)\in  
\bZ _P (\mF_q) $.
 Therefore $\sum _{v_1\in V_1} \psi  _{v_2,\dots ,v_n } (v_1)=0$ if $v_1\not \in \bZ _P (\mF_q) $ and $\sum _{v_1\in V_1} \psi  _{v_2,\dots ,v_n } (v_1)=q^{\mathrm{dim}(V_1)}$ if $v_1\in \bZ _P (\mF_q) $. 
\end{proof}

\begin{corollary} The analytic rank  of a  multilinear polynomial $P$ does not depend on the choice of the non-trivial additive character $\psi$. We will denote it by $a_\fK (P)$.
\end{corollary}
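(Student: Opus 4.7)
The proof will be immediate from Lemma \ref{ind}. The right-hand side of the identity
\[
A_{\fK,\psi}(P) = \frac{|\bZ_P(\mF_q)|}{q^{\sum_{i=2}^d \dim(V_i)}}
\]
is a non-negative rational number that makes no reference to the character $\psi$ whatsoever. So the first step is simply to invoke the lemma, observing in particular that $A_{\fK,\psi}(P)$ is real and non-negative, hence equal to its own absolute value.

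The second step is then the one-line conclusion: since $|A_{\fK,\psi}(P)|$ equals the character-independent quantity $|\bZ_P(\mF_q)|/q^{\sum_{i=2}^d \dim(V_i)}$, its logarithm is also independent of $\psi$, and therefore
\[
a_{\fK,\psi}(P) = -\log_q\!\bigl(|A_{\fK,\psi}(P)|\bigr)
\]
depends only on $P$ (and on $\fK$), justifying the notation $a_\fK(P)$.

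There is essentially no obstacle here; the whole content has already been absorbed into Lemma \ref{ind}. The only mild subtlety worth flagging explicitly is that the lemma requires $\psi$ to be non-trivial (which is what forces the inner character $\psi_{v_2,\dots,v_d}$ on $V_1$ to vanish in summation precisely off $\bZ_P(\mF_q)$); this is exactly the hypothesis under which the corollary is stated, so nothing further is needed.
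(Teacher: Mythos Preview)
Your proof is correct and matches the paper's approach exactly: the corollary is stated immediately after Lemma \ref{ind} with no separate proof, because the right-hand side of the lemma is visibly independent of $\psi$. You have simply spelled out what the paper leaves implicit.
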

  
Fro a field $\fK$ we denote by $\bar \fK$ the algebraic closure.  Obviously we have $ r_{\bar \fK}(P) \le  r_\fK (P)$.  We conjecture that the reverse inequality is essentially holds as well.  
\begin{conjecture}[d]\label{d} For any $d\geq 2$ there exists $\kk _d>0$ such that 
$ r_\fK (P) \leq \kk _d r_{\bar \fK}(P)$ for any  field $\fK$ and a multilinear $\fK$-polynomial $P$ of degree $d$,  where $\bar \fK$ is the algebraic closure of $\fK$.
\end{conjecture}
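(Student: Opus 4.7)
My plan is to treat the cases $d = 2$, $d = 3$, and $d \geq 4$ separately.

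For $d = 2$ the statement is trivial with $\kappa_2 = 1$: a bilinear $P$ has $r_\fK(P)$ equal to the rank of its Gram matrix, and matrix rank is invariant under field extension.

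For $d = 3$, the natural route is to pass through the analytic rank, using the paper's main theorem that the Schmidt and analytic ranks are proportional for trilinear forms over $\mF_q$. First, one bounds $a_{\mF_q}(P)$ in terms of $r_{\bar\fK}(P)$. By Lemma \ref{ind}, $a_{\mF_q}(P)$ measures how far $|\bZ_P(\mF_q)|$ is from $q^{\dim \bV_2 + \dim \bV_3}$. A $\bar\fK$-decomposition $P = \sum_{i=1}^{r} Q_i R_i$ of Schmidt rank $r$ exhibits $\bZ_P$ as containing a subvariety of codimension $O_d(r)$ in $\bV_2 \times \bV_3$ (setting the factors that depend only on $(v_2,v_3)$ to zero gives $O(r)$ multilinear equations), and Lang--Weil then yields $a_{\mF_q}(P) \leq C r$ for all sufficiently large $q$. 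Combined with the paper's theorem this gives $r_{\mF_q}(P) \leq C' r$, settling the finite-field case in positive characteristic. The passage to general fields proceeds by model-theoretic compactness: with $d, r, \dim V_i$ fixed, the statement is first-order in the theory of fields, so the finite-field case together with the Lefschetz principle handles arbitrary $\fK$; characteristic zero reduces to characteristic $p$ for large $p$ by specialization over a finitely generated $\mZ$-subalgebra.

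For $d \geq 4$, the same plan would work given an analog $r_{\mF_q}(P) \leq C_d \, a_{\mF_q}(P)$ of the paper's theorem for $d$-linear forms; establishing such an inequality is exactly the open problem that motivates the paper, and is the main obstacle. As a fallback, direct Galois descent is too weak: if the $\bar\fK$-decomposition of rank $r$ is defined only over a degree-$n$ extension $\fK'/\fK$, then expanding in a $\fK$-basis of $\fK'$ yields at best $r_\fK(P) \leq n^2 r$, and $n$ cannot be bounded in $r$ and $d$ alone because the parameter variety of rank-$r$ decompositions sits in an ambient space whose dimension grows with $\dim V_i$, so its smallest field of definition can be arbitrarily large.
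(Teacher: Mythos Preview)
Your $d=3$ argument is circular. The paper's result that $r_{\mF_q}(P)\le c_3\,a_{\mF_q}(P)$ for trilinear forms (the Corollary following Theorem~\ref{maint}) is \emph{deduced from} the very inequality you are trying to establish: Theorem~\ref{maint}(1) shows that Conjecture~\ref{d}(d), together with the Schmidt codimension bound, implies Conjecture~\ref{main}(d), and the trilinear case is then obtained by feeding in Derksen's Theorem~\ref{33}, which is precisely the statement $r_\fK(P)\le\tfrac32\,r_{\bar\fK}(P)$. So invoking ``the paper's main theorem'' to prove Conjecture~\ref{d}(3) assumes its own conclusion.

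Your Step~1 for $d=3$ also has a gap independent of the circularity. The known bound is $a_{\mF_q}(P)\le r_{\mF_q}(P)$, not $a_{\mF_q}(P)\le C\,r_{\bar\fK}(P)$. A $\bar\fK$-decomposition of rank $r$ only produces a $\bar\fK$-defined subvariety of $\bZ_P$ of codimension $\le r$; this bounds the geometric codimension $g$ by $r$, but an upper bound on $a_{\mF_q}(P)$ requires a \emph{lower} bound on $|\bZ_P(\mF_q)|$, and Lang--Weil does not furnish one without geometric irreducibility over $\mF_q$, which $\bZ_P$ need not enjoy.

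The paper's actual route to the $d=3$ case is entirely different and does not pass through analytic rank or finite fields. It quotes Derksen's $G$-stable rank $r_\fK^G$, for which one has both $r_\fK^G(P)=r_{\bar\fK}^G(P)$ (field-independence, coming from the GIT definition) and $\tfrac{2}{d}\,s_\fK(P)\le r_\fK^G(P)\le s_\fK(P)$. Chaining these and using $r_\fK=s_\fK$ for $d=3$ gives $r_\fK(P)\le\tfrac32\,r_{\bar\fK}(P)$ directly. For $d\ge4$ the paper, like you, leaves the conjecture open; your diagnosis there is accurate.
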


\begin{remark}
 It is easy to see that  Conjecture \ref{d} holds for $d=2$  with $\kk _2=1$.
\end{remark}

 In \cite{D} Derksen introduced the notion of the {\em $G$-stable rank} of a multilinear polynomial (denoted $r_{\fK}^G(P)$) defined in terms of Geometric Invariant Theory and proved the following
\begin{enumerate}
\item $r_{\fK}^G(P) = r_{\bar \fK}^G(P) $ and 
\item $\frac {2s _\fK(P)}{d}\leq r_{\fK}^G(P) \leq s _\fK(P)$
\end{enumerate}
This immediately implies 
the inequality $s_\fK (P)\leq \frac{3}{2} s_{\bar \fK}(P)$ for trilinear polynomials $P$.
Since $s_\fK (P)= r_\fK (P)$ for trilinear polynomials $P$ we obtain the following result.

\begin{theorem}[Dersken]\label{33}  For trilinear polynomials we have $r_\fK (P) \leq \frac{3}{2}r_{\bar \fK}(P)$. 
 \end{theorem}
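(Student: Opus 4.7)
The plan is to exploit the chain of inequalities that Derksen's $G$-stable rank $r_\fK^G(P)$ sits inside, using the fact that for trilinear forms the Schmidt rank and the slice rank coincide.

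First I would record the equality $r_\fK(P) = s_\fK(P)$ for $d=3$, which is stated in the Remark after Definition~\ref{11} (and holds for $d\le 3$ because every decomposition $P=\sum Q_iR_i$ with $\deg Q_i, \deg R_i < 3$ and $\deg P = 3$ must have a degree-$1$ factor in each summand). This reduction is what makes the trilinear case tractable: the analysis can now be carried out entirely in terms of slice rank, for which Derksen's GIT-theoretic bounds are available.

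Next, I would invoke the two Derksen inputs. From item (2), applied with $d=3$, one gets $s_\fK(P) \le \tfrac{3}{2}\, r_\fK^G(P)$, so after replacing Schmidt rank by slice rank on the left we have $r_\fK(P) \le \tfrac{3}{2}\, r_\fK^G(P)$. From item (1), the $G$-stable rank is invariant under passage to the algebraic closure, so $r_\fK^G(P) = r_{\bar\fK}^G(P)$. Finally, the upper bound in item (2) applied over $\bar\fK$ gives $r_{\bar\fK}^G(P) \le s_{\bar\fK}(P)$, and using $s_{\bar\fK}(P) = r_{\bar\fK}(P)$ (again by the Remark, since $d=3$) yields $r_{\bar\fK}^G(P) \le r_{\bar\fK}(P)$.

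Assembling the chain,
\[
r_\fK(P) = s_\fK(P) \;\le\; \tfrac{3}{2}\, r_\fK^G(P) \;=\; \tfrac{3}{2}\, r_{\bar\fK}^G(P) \;\le\; \tfrac{3}{2}\, s_{\bar\fK}(P) \;=\; \tfrac{3}{2}\, r_{\bar\fK}(P),
\]
which is the required bound. There is no genuine obstacle at this stage because all the heavy lifting is done by Derksen's theorem; the only delicate input one is relying on is the field-invariance $r_\fK^G(P) = r_{\bar\fK}^G(P)$, which is not self-evident and is the real reason this short proof works --- it is ultimately what lets a constant independent of $\fK$ be extracted, and any attempt to replace Derksen's construction would have to reproduce precisely this invariance.
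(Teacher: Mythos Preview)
Your proof is correct and follows essentially the same approach as the paper: both arguments chain together Derksen's lower bound $\tfrac{2}{d}s_\fK(P)\le r_\fK^G(P)$, the field-invariance $r_\fK^G(P)=r_{\bar\fK}^G(P)$, and the upper bound $r_{\bar\fK}^G(P)\le s_{\bar\fK}(P)$, and then use $s_\fK(P)=r_\fK(P)$ for $d=3$ on both ends.
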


Quantitative estimates for the relation between analytic rank and Schmidt rank provide a means for obtaining quantitative bounds for important problems in additive combinatorics number theory  and algebraic geometry. 

We conjecture the following quantitative relation between the Schmidt rank and analytic rank. 
\begin{conjecture}[d]\label{main}  For any $d\geq 2$ there exists $c_d>0$ such that  $r_{\mF _q}(P)\leq c_d a_{\mF _q}(P)$
for any  multilinear polynomial of degree $d$. 
\end{conjecture}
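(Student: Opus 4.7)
Since Conjecture~\ref{main} is open for $d \geq 4$, I focus on the trilinear case $d=3$, which is the main theorem of the paper. The plan is to chain the three inequalities
\[
r_{\mF_q}(P) \;\leq\; \tfrac{3}{2}\, r_{\overline{\mF_q}}(P) \;\leq\; \kappa \cdot \codim_{\overline{\mF_q}} \bZ_P \;\leq\; \kappa' \cdot a_{\mF_q}(P),
\]
where the first is Derksen's Theorem~\ref{33} (reducing to the algebraic closure), the third uses Lemma~\ref{ind} together with Lang-Weil point counting on $\bZ_P$, and the middle is a purely geometric bound over an algebraically closed field relating Schmidt rank to the codimension of the zero variety.

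The third step is in principle immediate: Lemma~\ref{ind} gives $a_{\mF_q}(P) = \dim V_2 + \dim V_3 - \log_q |\bZ_P(\mF_q)|$, and Lang-Weil yields $|\bZ_P(\mF_q)| \leq D\, q^{\dim \bZ_P}$ for $D$ the degree of $\bZ_P$. The easy half of the middle step, $\codim_{\overline{\mF_q}} \bZ_P \leq r(P)$, drops out of Claim~\ref{3}: if $W_1, W_2, W_3$ realize $s(P) = r(P)$ and $W_1' \subset V_1$ is a complement of $W_1$, then for $(y,z)\in W_2\times W_3$ the condition $(y,z)\in \bZ_P$ reduces to $P(w',y,z)=0$ for all $w'\in W_1'$, which is a system of $\dim W_1'$ bilinear equations inside $W_2\times W_3$; hence $\codim \bZ_P \leq \codim W_1 + \codim W_2 + \codim W_3 = r(P)$. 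The hard half $r_{\overline{\mF_q}}(P) \leq \kappa \codim \bZ_P$ is the geometric heart of the argument: given a small codimension bound on $\bZ_P$ over $\bar\fK$, one must produce subspaces $W_1, W_2, W_3$ realizing a correspondingly small slice rank for $P$.

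The main obstacle is twofold. First, the degree $D$ in Lang-Weil is not \emph{a priori} absolute: $\bZ_P$ may be defined by up to $\dim V_1$ bilinear equations, and a naive Bezout-type bound grows with that number. One must control $D$ intrinsically in terms of $\codim \bZ_P$, or pre-process by replacing $V_1$ with the image of its flattening $V_1 \to V_2^* \otimes V_3^*$, whose dimension is the $V_1$-rank of $P$ and which leaves $\bZ_P$ unchanged. Second, proving $r_{\overline{\mF_q}}(P) \leq \kappa \codim \bZ_P$ requires a Schmidt-type structural theorem for the bilinear variety $\bZ_P$ valid in arbitrary characteristic; the natural tool is the base-field-invariant $G$-stable rank of~\cite{D}, which already underlies Theorem~\ref{33}, combined with a GIT-stability analysis of the $GL(V_1) \times GL(V_2) \times GL(V_3)$ action on $V_1^*\otimes V_2^*\otimes V_3^*$ to extract the required product of subspaces on which $P$ vanishes.
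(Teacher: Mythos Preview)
Your overall three-step chain $r_{\mF_q}(P)\leq \tfrac32 r_{\bar\fK}(P)\leq \kappa\cdot g\leq \kappa'\cdot a_{\mF_q}(P)$ (with $g=\codim\bZ_P$) is exactly the architecture of the paper's proof of Theorem~\ref{maint}. However, both of the steps you flag as obstacles are genuinely unresolved in your plan, and the paper handles each by a concrete argument that your proposal does not contain.

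\textbf{The point-counting step.} Your Lang--Weil route, even after the flattening preprocessing, does not work: replacing $V_1$ by the image of $V_1\to V_2^\vee\otimes V_3^\vee$ leaves $\bZ_P$ cut out by $m=\dim V_1$ bilinear equations, and the degree of $\bZ_P$ (hence the Lang--Weil constant) can still be of order $2^m$ with $m$ unbounded in terms of $g$. The paper avoids this entirely via the \emph{rough bound} (Proposition~\ref{rb}): since the defining equations of $\bZ_P$ form a linear space $M$, Lemma~\ref{al}/Corollary~\ref{m} produces $g$ elements of $M\otimes\bar\fK$ whose zero locus $\bY'\supset\bZ_P$ already has codimension $g$. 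Then one intersects $\bY'$ with generic linear combinations of the Frobenius hypersurfaces $x_j^q-x_j=0$ and applies B\'ezout directly, obtaining $|\bZ_P(\mF_q)|\leq d^g q^{n-g}$ with no dependence on $\dim V_1$ at all. This is what turns into $a_{\mF_q}(P)\geq g(1-\log_q d)$.

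\textbf{The middle inequality $r_{\bar\fK}(P)\leq 2g$.} Your suggestion to extract subspaces via GIT and the $G$-stable rank is too vague to be a plan: Derksen's theory compares $r^G$ to $s$, not to $\codim\bZ_P$, and there is no stated mechanism by which a GIT-stability analysis produces $W_i$ from the geometry of $\bZ_P$. The paper's proof (Theorem~\ref{Schmidt(f)}) is completely explicit: pick a generic point $(v_0,w_0)$ of a top-dimensional component $Z\subset\bZ_P$, let $S_U,S_V$ be the left/right kernels of the bilinear form $P_{w_0}$, and let $S_W$ be the tangent space at $w_0$ to the projection of $Z$ into $W$. A first-order derivative argument (Claim~\ref{0}, using that $r(P_w)\leq r(P_{w_0})$ along the projection) forces $P|_{S_U\times S_V\times S_W}\equiv 0$, and a transcendence-degree count gives $\codim S_U+\codim S_V+\codim S_W\leq 2g$.
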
 

\begin{remark}\label{leq}
\begin{enumerate}
 \item The inequality $a_{\mF _q}(P)\leq r_{\mF _q}(P) $ is known. See \cite{Ap, lovett-rank}.
 \item It is easy to see that Conjecture \ref{main} holds for $d=2$ with $c_2=1$.
 \item In \cite{Mi} it was shown that there exist  constants $c_d, e_d$ such that for any  multilinear polynomial of degree $d$ we have $r_{\mF _q}(P)\leq c_d (a_{\mF _q}(P))^{e_d}$ (earlier work \cite{bl} gave ineffective bounds). Conjecture \ref{main} states that one can take $e_d=1$. 
 \end{enumerate}
\end{remark}

In his paper \cite{S}, Schmidt proved the following result

\begin{theorem}[Schmidt]\label{Schmidt1}For any $d\geq 2$ there exists 
$D _d$  such that for any complex multilinear  polynomial
 $P: V_1\times \dots \times V_d \to \mC$ we have $r_\mC (P)\leq D_dg$ where $g:=\mathrm{codim}_{\bV_2\times \dots \times \bV_d } \bZ _P $. 
\end{theorem}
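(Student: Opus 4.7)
The plan is to induct on $d$, establishing the stronger statement $s_\mC(P) \leq D_d g$, which implies $r_\mC(P) \leq s_\mC(P) \leq D_d g$. By Claim \ref{3}, this is equivalent to exhibiting subspaces $W_i \subseteq V_i$ with $P|_{W_1 \times \dots \times W_d} \equiv 0$ and $\sum_i \codim(W_i) \leq D_d g$.

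For the base case $d=2$, the bilinear form $P$ induces $\ell_P: V_2 \to V_1^\vee$, $v_2 \mapsto P(\cdot, v_2)$, whose kernel is $\bZ_P$. Taking $W_1 = V_1$ and $W_2 = \bZ_P$ gives $\codim(W_2) = g$, so $D_2 = 1$ works.

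For the inductive step, assume the result holds for $d-1$. Project $\bZ_P$ onto $\bV_d$ via $\pi: (v_2, \dots, v_d) \mapsto v_d$, and assume (after at worst restricting $V_d$, which affects $g$ only by a constant factor) that $\pi$ is dominant. Then for generic $v_d \in V_d$ the fiber is $\bZ_{P_{v_d}}$, of codimension $g$ in $\bV_2 \times \dots \times \bV_{d-1}$, where $P_{v_d}(v_1, \dots, v_{d-1}) := P(v_1, \dots, v_{d-1}, v_d)$. By induction, each such generic slice admits subspaces $W_i^{(v_d)} \subseteq V_i$ with $P_{v_d}|_{W_1^{(v_d)} \times \dots \times W_{d-1}^{(v_d)}} \equiv 0$ and $\sum_{i \leq d-1} \codim(W_i^{(v_d)}) \leq D_{d-1} g$.

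The main step is to produce a single tuple $(W_1, \dots, W_{d-1})$ together with a subspace $W_d \subseteq V_d$ of codimension $O(g)$ such that $P|_{W_1 \times \dots \times W_d} \equiv 0$. Working over $\mC$, constructibility lets us refine $\bV_d$ to a Zariski-open $U$ on which the decomposition type (the tuple of codimensions of $W_i^{(v_d)}$) is constant, yielding an algebraic map $U \to \prod_{i=1}^{d-1} \mathrm{Gr}(V_i)$. The principal obstacle is that the subspaces $W_i^{(v_d)}$ typically vary genuinely with $v_d$; one cannot intersect infinitely many of them without collapsing to $\{0\}$, nor sum them without exceeding the codimension budget. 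The resolution I would pursue is to select a subspace $W_d \subseteq V_d$ of codimension $O(g)$ spanned by a carefully chosen finite subset $\{v_d^{(1)}, \dots, v_d^{(k)}\} \subset U$ with $k = O(g)$, and define $W_i := \bigcap_j W_i^{(v_d^{(j)})}$; the key inequality $\codim(W_i) \leq \sum_j \codim(W_i^{(v_d^{(j)})}) \leq k \cdot D_{d-1} g$ absorbs a multiplicative factor $c(d)$, producing the recursion $D_d = c(d)\, D_{d-1}$. The hard part is showing that such a choice of $k = O(g)$ slices suffices, which amounts to a rigidity/finite-determination statement for the linear family $v_d \mapsto P_{v_d}$ on its slice-rank stratum, and is the main place where algebraic closedness of $\mC$ is used.
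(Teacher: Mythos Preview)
Your proposal has a genuine gap, and moreover the plan as written is internally inconsistent. You say you will choose $k=O(g)$ slice points $v_d^{(1)},\dots,v_d^{(k)}$, set $W_i=\bigcap_j W_i^{(v_d^{(j)})}$ for $i<d$, and obtain $\codim(W_i)\le k\cdot D_{d-1}g$, which you then claim ``absorbs a multiplicative factor $c(d)$''. But if $k=O(g)$ this bound is $O(g^2)$, not $O(g)$, so the recursion $D_d=c(d)D_{d-1}$ does not follow. Conversely, if you insist on $k=O_d(1)$, then the span of the $v_d^{(j)}$ has dimension $O_d(1)$, not codimension $O(g)$, and linearity in $v_d$ only gives vanishing of $P$ on $W_1\times\dots\times W_{d-1}\times\operatorname{span}(v_d^{(j)})$. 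Either way, the step you flag as ``the hard part'' --- that finitely many slices of size $O(g)$ determine the behaviour on a subspace $W_d$ of codimension $O(g)$ --- is precisely the content of the theorem, and you have not indicated any mechanism for it.

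The paper (following Schmidt, and worked out here explicitly for $d=3$) resolves exactly this tension by a completely different device: instead of intersecting finitely many discrete slices, it takes a \emph{single} generic point $w_0$ of the projection $Y=\pi(\bZ_P)\subset V_d$ and sets $W_d$ to be the \emph{tangent space} $T_{w_0}Y$. The subspaces $W_1,\dots,W_{d-1}$ are the left and right kernels $S_U,S_V$ of the bilinear form $P_{w_0}$. Vanishing of $P$ on $S_U\times S_V\times T_{w_0}Y$ is then deduced from a first-order deformation lemma (Claim~\ref{0}): along any curve in $Y$ through $w_0$ the rank of $P_{w}$ cannot exceed $r(P_{w_0})$ (by genericity of $w_0$), and this forces the derivative form $(\partial P_w/\partial t)|_{t=0}$ to vanish on $S_U\times S_V$. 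The codimension bound $\codim(S_U)+\codim(S_V)+\codim(T_{w_0}Y)\le 2g$ then drops out of the fiber-dimension identity $\codim(S_V)+\codim_W(Y)\le g$ together with $\codim(S_U)=\codim(S_V)=r(P_{w_0})$. The infinitesimal/tangent-space idea is what replaces your unproven ``finite-determination'' step, and it is the missing ingredient in your outline.
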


The main goal of this paper is the proof of the following result.

\begin{theorem}\label{maint} 
\begin{enumerate}
\item 
Assuming the validity Theorem \ref{Schmidt1}
 for multilinear polynomials of degree $d$ over algebraically closed fields of finite characteristic 
we show that 
Conjecture \ref{d}(d) implies  the validity of Conjecture \ref{main}(d) for $\fK = \mF _q, q> d$ with 
$c_d= \frac{D_d\kk _d}{1-\log _q(d)}$.
\item Theorem \ref{Schmidt1}
holds for multilinear polynomials of degree $3$ over algebraically closed fields of characteristic $\geq 3$ with $D_3=2$.
\end{enumerate}
\end{theorem}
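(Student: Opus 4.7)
Given a multilinear $P$ of degree $d$ over $\mF_q$ with $q>d$, the hypothesized form of Theorem~\ref{Schmidt1} over $\bar{\mF_q}$ together with Conjecture~\ref{d}(d) chain to give
\[
r_{\mF_q}(P)\;\le\;\kk_d\,r_{\bar{\mF_q}}(P)\;\le\;\kk_d D_d\,g,\qquad g:=\codim\bZ_P
\]
(noting that $g$ is preserved under scalar extension). It remains to establish $g(1-\log_q d)\le a_{\mF_q}(P)$, which by Lemma~\ref{ind} is equivalent to the point-count bound $|\bZ_P(\mF_q)|\le d^{\,g}q^{N-g}$ with $N:=\sum_{i\ge 2}\dim V_i$. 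The subvariety $\bZ_P\subset \mA^N$ is cut out by the $\dim V_1$ polynomials $P(e_j,v_2,\dots,v_d)$ of degree $d-1$ and has codimension $g$, so I would pick $g$ generic $\mF_q$-linear combinations of these to obtain a complete intersection $Y\supset\bZ_P$ of dimension $N-g$ and degree $\le (d-1)^g\le d^{\,g}$ (Bezout), then conclude via the standard bound $|Y(\mF_q)|\le\deg(Y)\cdot q^{\dim Y}$. The hypothesis $q>d$ is precisely what ensures $1-\log_q d>0$.

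\textbf{Part (2).} Let $P:V_1\times V_2\times V_3\to\fK$ be trilinear, $\fK=\bar\fK$ of characteristic $\ge 3$, and $g=\codim\bZ_P$. By Claim~\ref{3} together with the remark $r_\fK(P)=s_\fK(P)$ for $d=3$, the goal is to exhibit subspaces $W_i\subset V_i$ with $P|_{W_1\times W_2\times W_3}\equiv 0$ and $\sum_i\codim W_i\le 2g$. The plan is induction on $g$, with base $g=0$ forcing $P\equiv 0$. For the inductive step I would stratify $V_3$ by the rank of the bilinear form $\beta_{v_3}(v_1,v_2):=P(v_1,v_2,v_3)$ on $V_1\times V_2$, setting $V_3^{(\le r)}=\{v_3:\mathrm{rank}\,\beta_{v_3}\le r\}$; counting fibres of the projection $\bZ_P\to V_3$ yields the identity
\[
g\;=\;\min_{r\ge 0}\bigl(r+\codim V_3^{(\le r)}\bigr).
\]
Let $r_0$ achieve this minimum. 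When $r_0=0$ the stratum $V_3^{(\le 0)}=\ker\nu$ (with $\nu(v_3):=P(\cdot,\cdot,v_3)$) is a linear subspace of codimension $g$, and $W_1=V_1,W_2=V_2,W_3=\ker\nu$ already give total codimension $g\le 2g$.

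The substantive case is $r_0\ge 1$, where $V_3^{(\le r_0)}$ is only a determinantal subvariety of codimension $g-r_0$. Here I would find a linear subspace $W_3\subset V_3$ of codimension roughly $g-r_0$ on which the family $\{\beta_{w_3}\}_{w_3\in W_3}$ has uniformly bounded rank, and then cut down $W_1$ (or $W_2$) by at most $r_0$ further units of codimension in order to annihilate the residual forms, reaching the target $\sum\codim W_i\le 2g$. This requires a Bertini-type transversality argument applied to the rank stratification of a linear family of bilinear forms on $V_1\times V_2$; the hypothesis $\mathrm{char}\,\fK\ge 3$ enters precisely through the need for generic reducedness of the determinantal strata, which can fail in characteristic $2$. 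Carrying out this tight codimension bookkeeping inside the determinantal structures is the principal obstacle.
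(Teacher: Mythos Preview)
Your argument matches the paper's almost exactly: Section~3 is devoted to proving precisely the ``rough bound'' $|\bZ_P(\mF_q)|\le d^{\,g} q^{N-g}$ that you invoke, and Section~4 then chains it with Lemma~\ref{ind}, the Schmidt bound over $\bar\fK$, and Conjecture~\ref{d} just as you do. One small correction: the $g$ linear combinations cutting out a codimension-$g$ complete intersection containing $\bZ_P$ must in general be taken over $\bar{\mF_q}$, not over $\mF_q$, since the avoidance argument (Lemma~\ref{al}) needs an infinite field; the paper is explicit about this. This does not affect the point count over $\mF_q$.

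\textbf{Part (2).} Here your route diverges from the paper's, and the step you yourself flag as ``the principal obstacle'' is a genuine gap, not just bookkeeping. Even if you produce a linear subspace $W_3$ of codimension $g-r_0$ on which every $\beta_{w_3}$ has rank $\le r_0$, the left and right kernels of these forms need not be common, so there is no reason a further cut of only $r_0$ in $W_1$ (or $W_2$) annihilates the whole family; that would amount to bounding the slice rank of $P|_{V_1\times V_2\times W_3}$ by $r_0$, which does not follow from a pointwise rank bound. Your explanation of where $\mathrm{char}\,\fK\ge 3$ enters (generic reducedness of determinantal strata) is also not what the paper uses.

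The paper's argument is quite different and avoids determinantal varieties and induction altogether. It fixes an irreducible component $Z\subset Z_P$ of maximal dimension, takes a generic point $(v_0,w_0)\in Z$, and sets $S_U,S_V$ to be the left and right kernels of the \emph{single} bilinear form $P_{w_0}$, while $S_W:=T_{w_0}Y$ is the tangent space to the image $Y$ of $Z$ under projection to $W$. The key input is a linear-algebra lemma (Claim~\ref{0}): if a one-parameter family $B(t)$ of bilinear forms satisfies $r(B(t))\le r(B(t_0))$, then $(\partial_tB)|_{t_0}$ vanishes on $S_U(B(t_0))\times S_V(B(t_0))$. Applying this along curves in $Y$ through $w_0$ yields $P|_{S_U\times S_V\times S_W}\equiv 0$ directly. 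A generic-fibre dimension count then gives $\codim S_U+\codim S_W\le g$ and $\codim S_V+\codim S_W\le g$, so the total is at most $2g$. No Bertini argument and no stratification are needed.
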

The following statement follows now from Theorem \ref{33}(2).
\begin{corollary}   Conjecture \ref{main}(3) holds with  $c _3=3$.
\end{corollary}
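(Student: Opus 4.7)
The plan is to apply Theorem \ref{maint}(1) with $d=3$, after verifying its two standing hypotheses using the earlier results of the paper. The point of the corollary is that for trilinear forms, all the relevant constants are now explicit, so the implication in Theorem \ref{maint}(1) becomes unconditional and quantitative.

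First I would invoke Theorem \ref{33} (Derksen), which states $r_{\fK}(P) \leq \tfrac{3}{2}\, r_{\bar\fK}(P)$ for every trilinear polynomial over an arbitrary field $\fK$. This establishes the relevant instance of Conjecture \ref{d}(3) with constant $\kk_3 = 3/2$. Next I would invoke Theorem \ref{maint}(2), which gives Schmidt's theorem (Theorem \ref{Schmidt1}) for trilinear polynomials over algebraically closed fields of characteristic $\geq 3$ with $D_3 = 2$. These are exactly the two ingredients feeding into Theorem \ref{maint}(1).

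With both hypotheses available, Theorem \ref{maint}(1) applied at $d=3$ produces, for every trilinear $P$ over $\mF_q$ with $q > 3$ and odd characteristic, the bound
\[
r_{\mF_q}(P) \;\leq\; \frac{D_3\, \kk_3}{1-\log_q 3}\, a_{\mF_q}(P) \;=\; \frac{3}{1-\log_q 3}\, a_{\mF_q}(P).
\]
Since $\log_q 3 \to 0$ as $q \to \infty$, the prefactor converges to the claimed constant $c_3 = 3$, delivering Conjecture \ref{main}(3).

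The main thing to be careful about is not a genuine obstacle but a bookkeeping issue: the formula in Theorem \ref{maint}(1) gives a $q$-dependent constant strictly greater than $3$ for each fixed $q$, approaching $3$ only in the limit, so the assertion $c_3 = 3$ should be read in that asymptotic sense. A secondary caveat is the characteristic restriction inherited from Theorem \ref{maint}(2): the argument as stated covers $\mF_q$ only for $q$ a power of an odd prime (with $q > 3$), and the case of characteristic $2$ would need a separate treatment or must be explicitly excluded from the statement.
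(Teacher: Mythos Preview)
Your proposal is correct and follows exactly the paper's approach: the paper's one-line justification (``follows now from Theorem \ref{33}'') amounts to plugging $\kappa_3=3/2$ from Derksen and $D_3=2$ from Theorem \ref{maint}(2) into Theorem \ref{maint}(1). Your caveats about the $q$-dependent prefactor $\frac{3}{1-\log_q 3}$ and the characteristic restriction are accurate and in fact sharper than what the paper makes explicit; the stated value $c_3=3$ should indeed be read as the product $D_3\kappa_3$, with the full constant approaching $3$ as $q\to\infty$.
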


The proof of Theorem \ref{maint} is based on an extension of the {\it rough bound} from \cite{hr}, and as such the method of proof provided for  Conjecture \ref{main}(3) is completely different that the approach taken in \cite{bl}, \cite{Mi}. 

The conjectured bound in \ref{main} can be viewed as a special case of the conjectured bounded for the inverse theorem for the Gowers norms over finite fields. Let $f:V \to \mathbb C$. The Gowers uniformity norms of $f$, introduced in the study of 
arithmetic progressions in subsets of the intergers, 
are defined as follows
\[
\|f\|^{2^d}_{U_d} =\frac{1}{|V|^{d+1}}\sum _{x, , h_1, \ldots, h_d \in V} \Delta_{h_d} \ldots \Delta_{h_1} f(x)
\]
where  $\Delta_{h} f(x) = f(x+h) \overline{f(x)}$. 

\begin{conjecture}\label{icgn} Let $\fK=\mF_p$, and let $d>p$. There exists a constant $F=F(d, k)$ such that the following holds: for any $\delta>0$,  any  $\fK$-vector space $V$, any $f:V \to \mathbb C$ satisfying $\|f\|_{U_d} \ge \delta$, there exists a degree $d-1$ polynomial $P$ such that $|\frac{1}{|V|}\sum_{x \in V} f(x)\psi(P(x))| \ge \delta^{F}$. 
\end{conjecture}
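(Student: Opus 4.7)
My plan is to deduce Conjecture \ref{icgn} from a polynomial-bounds version of the arithmetic regularity lemma for polynomials, whose principal ingredient is a linear (or at worst polynomial) relationship between the analytic and Schmidt ranks in all degrees, that is, the higher-degree analogue of Conjecture \ref{main}. The starting observation is that for a polynomial phase $f = \psi(Q)$ with $\deg Q \le d-1$ one has $\|f\|_{U_d} = 1$, so the conjecture amounts to extracting such a $Q$ from a generic $f$ with quantitative control on the correlation.

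First I would apply an iterated Gowers--Cauchy--Schwarz argument: the hypothesis $\|f\|_{U_d} \ge \delta$ implies that for a $\delta^{O(1)}$-proportion of $(h_1,\dots,h_{d-1}) \in V^{d-1}$, the multiplicative derivative $\Delta_{h_1}\cdots \Delta_{h_{d-1}} f$ has a nontrivial linear character appearing in its Fourier expansion. This produces a symmetric multilinear-in-the-$h_i$ family of linear characters which encode the candidate $(d-1)$-st derivatives of the sought polynomial $Q$. Second, I would invoke a quantitative polynomial regularity lemma to decompose $f = f_{\mathrm{str}} + f_{\mathrm{sml}} + f_{\mathrm{unif}}$, with $f_{\mathrm{str}}$ lying in a polynomial factor of degree $\le d-1$; a pigeonhole then produces correlation $\ge \delta^{F}$ with a single polynomial phase, provided the complexity and equidistribution parameters of the factor are controlled polynomially in $1/\delta$.

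The polynomial control of the factor is where both quantitative equidistribution of polynomial factors and Conjecture \ref{main}(d) enter decisively: equidistribution fails only along low-rank combinations of the defining polynomials, and the bias--rank inequality converts the analytic deficit detected by the Gowers-norm hypothesis into a Schmidt-rank bound, hence into a genuine reduction of complexity. The main obstacle is precisely that Conjecture \ref{main}(d) is established here only for $d=3$; obtaining the higher-degree analogue with a \emph{linear} constant appears to require essentially new ideas beyond those of Schmidt, Derksen, and of the present work. A secondary and independent subtlety, specific to the regime $d>p$, is that classical polynomials are no longer closed under differentiation, so one must formulate and obtain polynomial-in-$1/\delta$ control on nonclassical polynomial factors, a quantitative strengthening of the Tao--Ziegler inverse theorem whose qualitative version in low characteristic is itself a recent achievement. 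Thus the proposed reduction is genuine but its two inputs---Conjecture \ref{main}(d) for all $d$, and polynomial equidistribution of (nonclassical) polynomial factors---are themselves presently out of reach.
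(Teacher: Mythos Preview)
The paper does not prove Conjecture~\ref{icgn}; it is stated as an open conjecture. The only remark the paper makes is that in the special case $f=\psi(Q)$ with $Q$ a degree-$d$ polynomial, one has $\Delta_{h_d}\cdots\Delta_{h_1}f=\psi(\tilde Q(h_1,\dots,h_d))$ for the associated multilinear form $\tilde Q$, and then Conjecture~\ref{main} combined with Fourier analysis yields Conjecture~\ref{icgn} \emph{in that special case}. There is no argument offered for general $f$.

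Your proposal goes well beyond this, sketching a route to the full conjecture via arithmetic regularity and iterated Cauchy--Schwarz. You are candid that the two key inputs---Conjecture~\ref{main} in all degrees, and polynomial-in-$1/\delta$ equidistribution for nonclassical polynomial factors when $d>p$---are not available, so what you have written is a conditional outline rather than a proof. That assessment is accurate: the proposal does not establish the conjecture, and the paper does not either. In particular, the regularity-lemma machinery you invoke, even in its strongest known quantitative forms, does not currently deliver the polynomial dependence $\delta^F$; the low-characteristic regime $d>p$ is an additional genuine obstruction, as you note. So your write-up is a fair survey of obstacles but should not be labeled a proof proposal for the statement as given.
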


When  $f= \psi(Q)$ where $Q$ is a polynomial of degree $d$, then $\Delta_{h_d} \ldots \Delta_{h_1} f(x) = \psi( \tilde Q(h_1, \ldots, h_d))$, where $\tilde Q$ is the multilinear symmetric form associated with $Q$. Via Fourier analysis  the conjectured bound in \ref{main} implies the validity of Conjecture $\ref{icgn}$ in this case. \\

Finally we present a  conjecture relating the rank of a subspace over a field $\fK$ to its rank over the algebraic closure, and prove it in a special case.

\begin{conjecture}\label{cd} There exists a constant $E_d$ such that $r_\fK (L)\leq E_d r_{\bar \fK }(L)$ for any field $\fK$ and a linear subspace $L\subset (V_1\otimes \ldots \otimes V_d)^\vee$.
\end{conjecture}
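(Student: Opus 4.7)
The plan is to tackle Conjecture~\ref{cd} in the special case that $L$ is one-dimensional, $L=\fK P$. By Claim~\ref{3}, $r_\fK(\fK P)=s_\fK(P)$, and by the remark after Definition~\ref{11}, $s_\fK(P)=r_\fK(P)$ whenever $d\le 3$; the same identities hold verbatim over $\bar\fK$. Hence the one-dimensional case of Conjecture~\ref{cd} is equivalent to the corresponding inequality in Conjecture~\ref{d}, namely $r_\fK(P)\le E_d\, r_{\bar\fK}(P)$. For $d=2$ this holds with $E_2=1$ since matrix rank is field-independent, and for $d=3$ it holds with $E_3=3/2$ by Theorem~\ref{33}.

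For $d=2$ but arbitrary $\dim L$, I would work directly with the linear-algebraic reformulation $r_\fK(L)=\min_{W_1\subset V_1}\bigl[\codim(W_1)+\dim\Phi_L(W_1)\bigr]$, where, viewing $L$ as a subspace of $\Hom(V_1,V_2^\vee)$, one sets $\Phi_L(W_1):=\sum_{l\in L} l(W_1)\subset V_2^\vee$. The function $W_1\mapsto \codim(W_1)+\dim\Phi_L(W_1)$ is defined over $\fK$ on the Grassmannian of subspaces of $V_1$, and the key step is to show that its minimum is attained at a $\fK$-rational $W_1$. A natural line of attack is to establish submodularity of this function on the lattice of subspaces, which makes the set of minimizers closed under intersection; one then takes the intersection of a Galois orbit of minimizers over $\bar\fK$ to obtain a $\fK$-rational one.

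The main obstacle is combining $d\ge 3$ with $\dim L\ge 2$. A naive greedy reduction—pick an $l\in L$ of relatively small rank, dispatch it by the one-dimensional case, then recurse on $L/\fK l$ restricted to $W_1\times\cdots\times W_d$—loses a factor of $\dim L$, because a generic element $l\in L$ can have $r_{\bar\fK}(l)$ as large as $\dim L\cdot r_{\bar\fK}(L)$. Avoiding this blow-up seems to demand a deeper geometric input, perhaps a dimension-counting argument on the product of Grassmannians showing that the locus of $(W_1,\ldots,W_d)$ with $\sum\codim W_i\le r_{\bar\fK}(L)$ has a $\fK$-rational component. Without such an input, I expect the paper to prove only the one-dimensional case (together with the $d=2$ case by the approach above) and to leave the joint statement open.
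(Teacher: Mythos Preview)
Your reading of the paper's scope is accurate: the paper proves only the case $d=2$ (Theorem~\ref{2}, with $E_2=2$) and leaves the general conjecture open. Your reduction of the one-dimensional case to Conjecture~\ref{d} is correct but is not pursued separately in the paper.

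For $d=2$ with arbitrary $\dim L$, your approach is genuinely different from the paper's and in fact sharper. The paper argues as follows: pick $A\in L$ of \emph{maximal} rank $\tilde r:=\max_{B\in L}r(B)$, invoke Kronecker's classification of matrix pencils to prove (over infinite $\fK$) that $B(\Ker A)\subset\Im A$ for every $B\in L$, and take $W'=\Ker A$, $V'=\Im A$; this yields $r_\fK(L)\le 2\tilde r\le 2\,r_{\bar\fK}(L)$. Finite fields are handled by a separate properness trick (pass to $\fK(t)$, then specialise). By contrast, your submodularity argument goes through uniformly: the function $W_1\mapsto\codim W_1+\dim\Phi_L(W_1)$ is submodular on the subspace lattice (codimension is modular, and $\Phi_L(W_1\cap W_1')\subset\Phi_L(W_1)\cap\Phi_L(W_1')$ while $\Phi_L(W_1+W_1')=\Phi_L(W_1)+\Phi_L(W_1')$), so the minimisers form a sublattice and there is a unique minimal minimiser $W^*$. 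Intersecting a finite Galois orbit of any $\bar\fK$-minimiser lands on a $\fK$-rational minimiser, giving $r_\fK(L)=r_{\bar\fK}(L)$, i.e.\ $E_2=1$ rather than $2$. The paper's route, on the other hand, buys an explicit pair $(W',V')$ tied to a single maximal-rank element, which is closer in spirit to the pencil literature.

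One small caveat: your Galois-descent step, as written, uses that an $\mathrm{Aut}(\bar\fK/\fK)$-stable subspace of $V_1\otimes\bar\fK$ descends to $\fK$, which is immediate for perfect $\fK$ (in particular for finite fields and characteristic~$0$) but needs an extra word for imperfect base fields; one clean fix is to observe that the unique minimal minimiser is a single $\bar\fK$-point of a \emph{reduced} closed $\fK$-subscheme of the Grassmannian, or alternatively to first pass to the perfect closure.
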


\begin{theorem}\label{2} Conjecture \ref{cd} holds for $d=2$, with $E_d=2$. 
\end{theorem}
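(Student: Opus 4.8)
The plan is to eliminate one of the two subspaces, to recognise the resulting one-variable minimization as that of a submodular function on the subspace lattice of $V_1$, and then to extract a $\fK$-rational minimizer from the lattice structure of the optimal subspaces; this in fact yields $r_\fK(L)=r_{\bar\fK}(L)$, hence Conjecture~\ref{cd} for $d=2$ with constant $1$, a fortiori with $2$. First I would rewrite $r_\fK(L)$ by duality: for subspaces $W_1\subset V_1$ and $W_2\subset V_2$, the condition that every $l\in L$ vanishes on $W_1\times W_2$ is equivalent to $W_1\otimes W_2\subset L^{\perp}$, the annihilator of $L$ inside $V_1\otimes V_2$. Evaluating $l\in L\subset(V_1\otimes V_2)^{\vee}$ at $(w_1,\cdot)$ for $w_1\in V_1$ gives an element $l(w_1,\cdot)\in V_2^{\vee}$; let $\Theta\colon L\otimes V_1\to V_2^{\vee}$ be the associated linear map and, for $W_1\subset V_1$, set $L\langle W_1\rangle:=\Theta(L\otimes W_1)\subset V_2^{\vee}$. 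Then $W_1\otimes W_2\subset L^{\perp}$ holds iff $W_2\subset L\langle W_1\rangle^{\perp}$, so the best $W_2$ for a given $W_1$ is $L\langle W_1\rangle^{\perp}$, and
\[
 r_\fK(L)=\min_{W_1\subset V_1}\bigl(\codim_{V_1}W_1+\dim L\langle W_1\rangle\bigr)=:\min_{W_1}f(W_1).
\]
Since $\Theta$ is defined over $\fK$, and images and dimensions are unchanged under scalar extension, $f$ is compatible with base change and the same formula holds over every extension of $\fK$.

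Next I would note that $f$ is submodular on the lattice of subspaces of $V_1$: the summand $\codim W$ is modular, and $W\mapsto\dim L\langle W\rangle$ is monotone and submodular because $L\langle W+W'\rangle=L\langle W\rangle+L\langle W'\rangle$ while $L\langle W\cap W'\rangle\subset L\langle W\rangle\cap L\langle W'\rangle$. Hence the set of minimizers of $f$ is a sublattice of the subspace lattice, and since that lattice has the ascending chain condition, the sublattice has a largest element $\widehat W$ (for example the sum of all minimizers, which is a finite sum and hence again a minimizer). Carried out over the separable closure $\fK^{\mathrm s}$ of $\fK$, the group $\mathrm{Gal}(\fK^{\mathrm s}/\fK)$ leaves $f$ invariant, being assembled from $\fK$-rational data, so it permutes the minimizers and in particular fixes their largest one $\widehat W$; by Galois descent $\widehat W$ is defined over $\fK$. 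Base-change compatibility of $f$ then gives $r_\fK(L)\le f(\widehat W)=r_{\fK^{\mathrm s}}(L)$; since passing to $\fK^{\mathrm s}$ can only lower the minimum, $r_\fK(L)=r_{\fK^{\mathrm s}}(L)$.

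Finally I would descend from $\fK^{\mathrm s}$ to $\bar\fK$. This extension is purely inseparable, so if $\fK$ has characteristic $0$ there is nothing to prove; otherwise put $p=\operatorname{char}\fK$. Choose $W_1,W_2$ optimal over $\bar\fK$, so $W_1\otimes W_2\subset(L^{\perp})_{\bar\fK}$ with $\codim W_1+\codim W_2=r_{\bar\fK}(L)$. Their coordinates in fixed $\fK^{\mathrm s}$-bases of $V_1,V_2$ generate a finite purely inseparable extension of $\fK^{\mathrm s}$, so for some $e$ the $p^{e}$-th power map carries all of them into $\fK^{\mathrm s}$. Raising every $\bar\fK$-coordinate to the $p^{e}$-th power is a Frobenius-semilinear automorphism of the ambient spaces; it preserves dimensions, inclusions and tensor products of subspaces, and fixes every $\fK^{\mathrm s}$-rational subspace, in particular $(L^{\perp})_{\bar\fK}$. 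Therefore the twisted pair $W_1^{[p^{e}]},W_2^{[p^{e}]}$ is $\fK^{\mathrm s}$-rational, still lies in $(L^{\perp})_{\bar\fK}$, and has the same codimensions, so $r_{\fK^{\mathrm s}}(L)\le r_{\bar\fK}(L)$. Combined with the previous step this gives $r_\fK(L)=r_{\bar\fK}(L)\le 2\,r_{\bar\fK}(L)$.

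The only genuine difficulty here is the last descent. For perfect $\fK$ — in particular every finite field and every field of characteristic $0$ — the purely inseparable step is vacuous and the argument is simply submodular minimization followed by Galois descent of the extremal optimal subspace; it is the imperfect case that requires the Frobenius-twist bookkeeping, and even there one obtains the constant $1$.
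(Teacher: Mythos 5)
Your reduction to the one--variable minimization $r_\fK(L)=\min_{W_1}\bigl(\codim W_1+\dim L\langle W_1\rangle\bigr)$, the submodularity of $f$, the resulting lattice of minimizers with a unique largest element, and the Galois descent of that element from $\fK^{\mathrm s}$ to $\fK$ are all correct. This is a genuinely different route from the paper, which instead bounds $r_\fK(L)\le 2\,\tilde r_\fK(L)$ with $\tilde r(L)=\max_{B\in L}r(B)$ via Kronecker's classification of pencils (plus a specialization argument for finite fields), and then uses that individual matrix ranks are insensitive to field extension; that is where the paper's factor $2$ comes from. Your argument, where it works, buys the sharper constant $E_2=1$. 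In particular for every perfect field --- so for all finite fields and all fields of characteristic $0$ --- your proof is complete and stronger than the stated theorem.

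The gap is in the purely inseparable descent, i.e.\ exactly the case of an imperfect base field. The coordinatewise $p^{e}$-th power map $\Phi\colon\sum c_ie_i\mapsto\sum c_i^{p^{e}}e_i$ is semilinear over the automorphism $c\mapsto c^{p^{e}}$ of $\bar\fK$, and it does \emph{not} fix a subspace merely because that subspace is $\fK^{\mathrm s}$-rational (or even $\fK$-rational): it sends $\mathrm{span}(e_1+ce_2)$ to $\mathrm{span}(e_1+c^{p^{e}}e_2)$, so it fixes only subspaces defined over $\mF_{p^{e}}$. In particular $\Phi$ carries $(L^{\perp})_{\bar\fK}$ to the Frobenius twist $(L^{(p^{e})})^{\perp}$, not to itself, and whenever the inseparable step is actually needed ($\fK$ imperfect, hence containing a transcendental element) these differ. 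What your computation really shows is $r_{\fK^{\mathrm s}}(L^{(p^{e})})\le r_{\bar\fK}(L)$, which is not the inequality you need. Purely inseparable extensions have no automorphisms to average over, so this cannot be patched by choosing a better twist; one either needs a different argument for $r_{\fK^{\mathrm s}}(L)=r_{\bar\fK}(L)$ (if it is true) or must fall back, as the paper does, on a quantity like $\tilde r(L)$ that is manifestly stable under all field extensions, accepting the factor $2$. As written, your proof establishes the theorem only for perfect $\fK$, whereas the statement is for arbitrary fields.
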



\section{Proof of Theorem \ref{2}}

In this section we prove Theorem \ref{2}. 
We start the proof of  with
the following result.

\begin{proposition}\label{22} For any field $\fK$ and a linear subspace  $L\subset \mathrm{Hom}_\fK (W,V)$ there exists $\fK$-subspaces 
$W'\subset W$ and  $V'\subset V$ such that $\mathrm{dim}(W/W')=\mathrm{dim}(V')=\ti r_{\fK}(L)$ and $l(W')\subset V'$ for all $l\in L$ where  $\ti r(L):= \max _{B\in L}r(B) $.
\end{proposition}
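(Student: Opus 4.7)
Let $r := \tilde r_\fK(L)$ and fix $B_0 \in L$ with $\mathrm{rank}(B_0) = r$. Write $W_0 := \ker B_0$ and $V_0 := \mathrm{im}(B_0)$, so that $\mathrm{codim}_W(W_0) = \dim V_0 = r$. There are two natural candidates for the required pair $(W',V')$: either take $W' := W_0$ together with an $r$-dimensional $V' \supseteq L(W_0) := \sum_{l \in L} l(W_0)$, which succeeds when $\dim L(W_0) \le r$; or dually take $V' := V_0$ together with a codimension-$r$ subspace $W' \subseteq W^\sharp := \bigcap_{l \in L} l^{-1}(V_0)$, which succeeds when $\mathrm{codim}_W(W^\sharp) \le r$. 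I would prove that for some $B_0$ of maximum rank at least one of these two inequalities must hold.

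Assume for contradiction that both bounds fail. The first failure yields $l_1,\dots,l_{r+1} \in L$ and $w_1,\dots,w_{r+1} \in W_0$ with $\{l_i(w_i)\}$ linearly independent in $V$; since $\dim V_0 = r$, at least one of these vectors --- say $l_1(w_1)$ --- lies outside $V_0$. The second failure gives $r+1$ vectors in $W$ whose classes in $W/W^\sharp$ are linearly independent. From these data I would construct $B \in L$ with $\mathrm{rank}(B) > r$, contradicting the maximality of $B_0$.

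Concretely, fix a basis $u_1,\dots,u_r$ of a complement $U$ of $W_0$ in $W$ with $v_j := B_0(u_j)$ a basis of $V_0$, and form $B_c := B_0 + \sum_{i=1}^{r+1} c_i l_i \in L$ for $c = (c_i) \in \fK^{r+1}$. In the associated basis, consider the $(r+1) \times (r+1)$ minor of $B_c$ with column indices $u_1,\dots,u_r,w_1$ and rows dual to $v_1,\dots,v_r$ together with a direction in $V$ detecting the $(V/V_0)$-component of $l_1(w_1)$. This minor is a polynomial in $c$ of total degree at most $r+1$: its constant term vanishes because $B_0(w_1) = 0$, whereas its coefficient of $c_1$ equals the nonzero component of $l_1(w_1)$ outside $V_0$. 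Hence the minor is a nonzero polynomial. Over an infinite $\fK$, or over $\fK = \mF_q$ with $q > r+1$, standard polynomial non-vanishing supplies a specialization $c \in \fK^{r+1}$ with $\mathrm{rank}(B_c) \ge r+1$, the desired contradiction.

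The main obstacle is the small-field case $|\fK| \le r+1$, where a polynomial of bounded degree can vanish identically on $\fK^{r+1}$ despite being nonzero as a polynomial. Here one must invoke the second failure hypothesis (codimension of $W^\sharp$ exceeding $r$), which produces additional independent directions in $W$ and allows a refinement of the $l_i$'s by taking suitable linear combinations within $L$, so that the resulting minor restricts to a univariate polynomial of degree $< |\fK|$ that is forced to have a non-root. This interplay between the two obstructions, used to overcome small-field effects, is the most delicate step of the argument.
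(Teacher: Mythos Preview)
Your infinite-field argument is correct and is in fact more elementary than the paper's. Over an infinite $\fK$ you only need the \emph{first} failure: if some $l_1\in L$ and $w_1\in W_0=\ker B_0$ satisfy $l_1(w_1)\notin V_0=\mathrm{Im}(B_0)$, then your $(r+1)\times(r+1)$ minor of $B_0+c\,l_1$ is a nonzero polynomial in $c$ (linear term $\neq 0$), so some specialisation has rank $>r$. Contrapositively $l(W_0)\subset V_0$ for every $l\in L$, and one may simply take $(W',V')=(W_0,V_0)$. The paper reaches exactly the same conclusion but via Kronecker's classification of matrix pencils (reducing to the irreducible blocks $(A_n,B_n)$ and $(A'_n,B'_n)$); your determinant computation bypasses that structure theorem entirely. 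The ``second candidate'' $V'=V_0$, $W'\subset W^\sharp$ is not actually needed at this stage.

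The genuine gap is the small-field case $|\fK|\le r+1$. Your minor has degree $r+1$, and there is no mechanism in your plan to lower that degree: the size of the minor is forced by the rank you are trying to exceed, and invoking the failure of the second candidate does not change it. The paper's own Remark following Lemma~2.4 shows that over $\fK=\mF_q$ one can have $r(B_0+tB)\equiv r(B_0)$ for all $t\in\fK$ while $B(\ker B_0)\not\subset\mathrm{Im}(B_0)$, so the one-parameter determinant argument genuinely breaks. Your sketch (``refine the $l_i$'s so that the minor restricts to a univariate polynomial of degree $<|\fK|$'') does not explain how the extra directions coming from $\mathrm{codim}(W^\sharp)>r$ would achieve this, and I do not see a way to make it work.

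The paper handles finite $\fK$ by a completely different, geometric specialisation argument. One forms the closed subvariety $\bX\subset\mathrm{Gr}\times\mathrm{Gr}'$ of pairs $(W',V')$ with the required codimension/dimension and with $l(W')\subset V'$ for all $l\in L$; this $\bX$ is projective. Applying the already-proved infinite-field case over $K=\fK(t)$ gives $\bX(K)\neq\varnothing$, i.e.\ a rational $\fK$-map $\bP^1\dashrightarrow\bX$. Properness of $\bX$ extends this to a regular $\fK$-morphism $\bP^1\to\bX$, and evaluating at a $\fK$-point of $\bP^1$ produces the desired $(W',V')\in\bX(\fK)$. This step replaces your attempted combinatorial refinement and is the idea your proposal is missing.
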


\begin{proof}  
We start with the follows result of Kronecker (see \cite{Kr}).

\begin{definition} 
\begin{enumerate}
\item Let $V,W$ be $\fK$-vector spaces. A pair $A,B\in \Hom (V,W)$ is irreducible is there is no 
non-trivial decomposition $V= V_1\oplus V_2$, $W = W_1\oplus W_2 $ such $A(V_i), B(V_i)\subset W_i$, for $i=1,2$.
\item For any $n\geq 0$ we denote by $A_n,B_n : \fK ^n\to \fK ^{n+1}$ maps such that $A_n(e_i)=f_i$, $B_n(e_i)= f_{i+1}$ for $1\leq i\leq n$, and denote by $A'_n,B'_n : \fK ^ {n+1}\to \fK ^n$ maps such that $A'_n(e_i)=f_i$  for $1\leq i\leq n$, $A_n(e_{n+1})=0$ and $B'_n(e_1)=0, B'_n(e_ {i+1})= f_i$ for  $2\leq i\leq n+1$. 
\end{enumerate}
\end{definition}

\begin{claim} Let  $ A, B\in \Hom (V,W)$ be an  irreducible pair of linear maps between are finite dimensional $\fK$-vector spaces. Then there
exist automorphisms 
$T\in \Aut(V),S\in \Aut(W)$ such that either 
\begin{enumerate}
\item $A$ is an isomorphism or 
\item $B$ is an isomorphism or 
\item there exist automorphisms 
$T\in \Aut(V)$, $S\in \Aut(W)$ such that $(SAT, SBT)= (A_n,B_n), n=\dim(V)$ such that  $(SAT, SBT)= (A_n,B_n)$ or
\item there exist automorphisms 
$T\in \Aut(V)$, $S\in \Aut(W)$ such that $(SAT, SBT)= (A'_n,B'_n) , n=\dim(W)$.
\end{enumerate}
\end{claim}

\begin{lemma}\label{Kr}If $|\fK| =\infty$ and $A,B\in \mathrm{Hom} _\fK(V,W)$  linear maps such that
 $r(A)\geq r(A+tB)$ for all $t\in \fK$. Then
 $B(\mathrm{Ker}(A))\subset \mathrm{Im}(A)$.
\end{lemma}

\begin{remark}The assumption that $|\fK| =\infty$ is  necessary. Indeed in the case when   $\fK =\mF _q$ we can  take $V=W=\fK [\mF _q]$ with the basis $\{e_x\}_{x\in \mF _q}$,  $A(e_x)=xe_x$ and $ B=Id$. Then  $r(A+tB) =r(A)=q-1$ for all $ t\in \fK$ but  $B(\mathrm{Ker}(A))\not \subset \mathrm{Im}(A)$.
\end{remark}
\begin{proof} Consider first the case when a pair $(A,B)$ is irreducible. If $\mathrm{Im}(A)=W$ or $\mathrm{Ker}(A)=\{0\}$ we obviously have the inclusion  $B(\mathrm{Ker}(A))\subset \mathrm{Im}(A)$. In cases $(1)$ and $(4)$ the map $A$ is onto and in the case $(3)$ the map $A$ is an imbedding. In the case $(2)$ we have $\dim(V)=\dim(W)$ and either $A$ is onto or $r(A)<r(B)$. We claim that the assumption that $r(A)<r(B)$ leads to a contradiction.

Let $S \subset \bA$ be the subset of $s_0$ such that $r(s_0A+B) < \max _{s\in \bar \fK} r(sA+B) $. Then  $S (\fK)\subset \fK$ is finite. So  $r(sA+B) \geq r(B)$ outside a finite set of $s$,  and since   $|\fK| =\infty$ 
and there exists $s \in \fK$ such that $r(A+s^{-1}B)  \ge r(B)> r(A)$, which is a contradiction.  

\sms

Now consider the general case when a pair $(A,B)$ is a finite direct sum of irreducible pairs $(A_i, B_i), i\in I$. Since, as we have seen in the previous paragraph,  the condition $r(A_i)\leq r(A_i+tB_i)$ is automatically true outside a finite set of $t\in \fK$ and  $|\fK| =\infty$ the assumption that  $r(A)\geq r(A+tB)$ for all $t\in \fK$ implies that that  $r(A_i)\geq r(A_i+tB_i)$ outside a finite set of $t\in \fK ,i\in I$. So  $B_i(\mathrm{Ker}(A_i))\subset \mathrm{Im}(A_i), i\in I$. Therefore  $B(\mathrm{Ker}(A))\subset \mathrm{Im}(A)$.

\end{proof}

\begin{lemma}\label{inf} Proposition  \ref{22} holds in the case when $|\fK| =\infty$. 

\end{lemma}

\begin{proof} 
Choose $A\in L$ such that $r(A)= \ti r(L)$ and define $W':= \mathrm{Ker}(A), V':= \mathrm{Im}(A)$.   As follows from Lemma \ref{Kr} we have $l(W')\subset V'$ for all $l\in L$. But $\mathrm{dim}(W/W')=\mathrm{dim}(V')=r(A) $. \end{proof}

Now consider the case when the field  $\fK$ is finite.
Let $ \mathrm{\bG r}$ be the Grassmanian of subspaces $W'\subset W$ of codimension $\ti r(L)$, and let $ \mathrm{\bG r'}$ be the Grassmanian of subspaces $V'\subset V$ of dimension $\ti r(L)$  and 
 $\bX _{\ti r_K(L)}\subset  \mathrm{\bG r}\times  \mathrm{\bG r'}$ be the subvariety of pairs
$(W',V')$ such that
and $l(W')\subset V'$ for all $l\in L$. It is clear that the subvariety  $\bX \subset  \mathrm{\bG r}\times \bG r'$ is closed. Therefore it is proper.

Let $K=\fK (t)$.
Since the field $K$ is infinite it follows from Claim \ref{inf} that $\bX (K)\neq \emp$. So  there exists a rational $\fK$-morphism $\hat  f: \bP ^1\to \bX$. Any such morphism is regular outside a finite $\fK$- subset $S\subset \bP ^1$. So we obtain a regular $\fK$-morphism $\ti f: \bP ^1 \setminus S \to \bX$. Since 
$\bX$ is proper,  $\ti f$ extends to a regular $\fK$- morphism $f: \bP ^1 \to \bX$. Let $(W'_0,V'_0):= f(0)\in \bX (\fK)$. By definition of the variety $\bX$ we see that $l(W'_0)\subset V'_0$ for all $l\in L$.
\end{proof}

Now we can finish the proof of Theorem \ref{2}.

As follows from Lemma \ref{2} there exists $\fK$-subspaces $V'\subset V$, $W' \subset W$ such that $l(W'_0)\subset V'_0$ for all $l\in L\otimes _\fK K$ and
 $\mathrm{dim}(W/ W')+  \mathrm{dim}(V')= 2 r_{\bar \fK}(L) $. So $ r_\fK (L) \leq 2 r_{\bar \fK}(L)$.

\section{Rough bound}

\subsection{A lemma on the codimension}
Let $K$ be an infinite field,  let $A$ be a finitely generated $K$-algebra of (Krull) dimension $n$ and  let $L \subset A$ be linear subspace of $A$. We denote by   $ J \subset A$ the ideal generated by $L$ and denote by 
 $ A_L$ the quotient algebra $A/J$.

\begin{lemma}\label{al}
If $\mathrm{dim}A_L < n$ then there exists a finite collection of subspaces  $L_i \subsetneq L$, $i \in I$ such that 
 the algebra $A_l$ has dimension $<n$ for any $l\in L\sm \bigcup _{i\in I}L_i$.
 \end{lemma}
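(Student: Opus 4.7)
My plan is to take the $L_i$ to be the intersections of $L$ with the ``top-dimensional'' minimal primes of $A$. The hypothesis on $A_L$ will force each such intersection to be a proper subspace, and for $l$ avoiding all of them I will apply Krull's principal ideal theorem inside each top-dimensional component of $\mathrm{Spec}\,A$ to get the required strict dimension drop.

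In detail: since $A$ is a finitely generated $K$-algebra it is Noetherian, with only finitely many minimal primes $P_1,\dots,P_m$; I will let $I\subset\{1,\dots,m\}$ consist of those indices for which $\dim A/P_i=n$, and set $L_i := L\cap P_i$ for $i\in I$. Each $L_i$ is a $K$-subspace of $L$. To see $L_i\subsetneq L$, observe that $L\subset P_i$ would force $J\subset P_i$, hence $\dim A_L\geq \dim A/P_i=n$, contradicting the hypothesis. Now for any $l\in L\setminus\bigcup_{i\in I}L_i$, since every minimal prime of $(l)$ in $A$ contains some minimal prime of $A$, one has
\[
\dim A/(l) \;=\; \max_{1\leq j\leq m}\; \dim (A/P_j)/(\bar l),
\]
where $\bar l$ denotes the image of $l$ in $A/P_j$. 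For $j\in I$, $\bar l$ is a nonzero element of the $K$-domain $A/P_j$, so Krull's principal ideal theorem together with the catenary property of finitely generated $K$-algebras will yield $\dim(A/P_j)/(\bar l)=\dim A/P_j-1=n-1$. For $j\notin I$ the bound $\dim(A/P_j)/(\bar l)\leq \dim A/P_j<n$ is automatic. Hence $\dim A/(l)<n$, as required.

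The argument is essentially a direct application of Krull's principal ideal theorem and I do not expect any serious obstacle; the only point that deserves care is upgrading the height-one conclusion of that theorem to the dimension identity $\dim(A/P_j)/(\bar l)=\dim A/P_j-1$, which uses catenarity, standard for finitely generated algebras over a field. The hypothesis $|K|=\infty$ is never actually used in deriving the conclusion for a specific $l$; it plays its customary role of making the statement nonvacuous, since over an infinite field a finite union of proper $K$-subspaces of $L$ cannot exhaust $L$, so $L\setminus\bigcup_{i\in I}L_i$ is nonempty (in fact Zariski dense).
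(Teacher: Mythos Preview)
Your proof is correct and is essentially the same argument as the paper's: the paper lets $\nu_i:A\to K_i$ be the map to the function field of each $n$-dimensional irreducible component $X_i$ and sets $L_i=L\cap\mathrm{Ker}(\nu_i)$, but $\mathrm{Ker}(\nu_i)$ is precisely the minimal prime $P_i$ corresponding to $X_i$, so the paper's $L_i$ coincide with yours. The only difference is cosmetic: where the paper simply asserts that a nonzero image in each $K_i$ forces $\dim A_l<n$, you spell this out via Krull's principal ideal theorem and catenarity (in fact the weaker observation that a proper closed subset of an irreducible variety has strictly smaller dimension already suffices).
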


 \begin{proof}

Let $X_i, \  i = 1,\ldots,k$,  be irreducible components of $SpecA$ of dimension $n$.

For every $i$ we denote by $K_i$ the field of rational functions on the component $X_i$, 
and consider the natural morphism $\nu_i: A \to K_i$. 

\sms

Since $\mathrm{dim}A_L < n$ the image 
  $\nu_i(L)$ is not zero. Hence the subspace $ L_i := L \bigcap \mathrm{Ker}(\nu_i) \subset L$ is strictly contained in $L$.

\sms

If an element $l \in L$ does not belong to any of the spaces $L_i$ then its image in every field $K_i$ is
not zero. This implies that the dimension of the algebra $A_l = A / Al$ is less than $n$.
\end{proof}
\begin{corollary}\label{m}There exist elements  $l_1,\dots ,l_m\in L, m:= \mathrm{dim}(A)-\mathrm{dim}(A/L)$  such that $\mathrm{dim}(A/ J)=\mathrm{dim}(A/L) $ where $ J\subset A$ is the ideal generated by  $l_1,\dots ,l_m$.
\end{corollary}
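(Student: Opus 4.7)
The plan is to build the elements $l_1, \dots, l_m$ one at a time by iterating Lemma \ref{al}, using the infinitude of $K$ at each step to dodge the bad subspaces. Set $n := \dim(A)$ and note that by hypothesis $\dim(A_L) = n - m$.

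The induction hypothesis at stage $i$ is that we have chosen $l_1,\dots,l_i \in L$ such that the ring $A^{(i)} := A/(l_1,\dots,l_i)$ has dimension exactly $n-i$. Let $L^{(i)} \subset A^{(i)}$ be the image of $L$. The key observation is that the ideal generated by $L^{(i)}$ in $A^{(i)}$ is the image of the ideal generated by $L$ in $A$, so $A^{(i)}/L^{(i)} \cong A/J = A_L$, which has dimension $n - m$. Hence as long as $i < m$, we have $\dim(A^{(i)}/L^{(i)}) = n - m < n - i = \dim(A^{(i)})$, so Lemma \ref{al} applies to $A^{(i)}$ and $L^{(i)}$: it produces finitely many proper subspaces $M_j \subsetneq L^{(i)}$ such that every $\ell \in L^{(i)} \setminus \bigcup_j M_j$ satisfies $\dim(A^{(i)}/(\ell)) < n - i$.

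Because $K$ is infinite, a $K$-vector space is never a finite union of proper subspaces, so we can choose $\ell \in L^{(i)} \setminus \bigcup_j M_j$ and lift it to some $l_{i+1} \in L$. By Krull's principal ideal theorem the dimension of $A^{(i)}$ drops by at most one when we quotient by the single element $l_{i+1}$, while by the choice of $\ell$ it drops by at least one. So $\dim(A^{(i+1)}) = n - i - 1$, closing the induction. After $m$ steps the ideal $J' := (l_1,\dots,l_m)$ satisfies $\dim(A/J') = n - m = \dim(A_L)$, as required.

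The only subtlety worth flagging is the bookkeeping in the induction step — one must check that Lemma \ref{al} can be applied to the pair $(A^{(i)}, L^{(i)})$ rather than to the original $(A, L)$, which requires identifying $A^{(i)}/L^{(i)}$ with $A_L$; and one must remember to invoke both Krull's theorem (for the lower bound on the surviving dimension) and the infinitude of $K$ (to avoid the bad proper subspaces). No single step is hard, but omitting either of these observations would break the argument.
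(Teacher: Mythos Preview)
Your approach is exactly the induction the paper intends (the paper's entire proof is the phrase ``Induction in $m$''), and the key steps --- identifying $A^{(i)}/(L^{(i)})$ with $A_L$, applying Lemma~\ref{al} to the quotient, and using $|K|=\infty$ to avoid the finitely many bad hyperplanes --- are all correct.

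There is one genuine slip. The appeal to Krull's principal ideal theorem for the lower bound $\dim(A^{(i+1)}) \geq \dim(A^{(i)}) - 1$ is not valid in this generality. Krull bounds the \emph{height} of minimal primes over a principal ideal; this converts into the dimension inequality you want only for equidimensional catenary rings (or local rings, or domains), and there is no reason the successive quotients $A^{(i)}$ should be equidimensional. Concretely, if $A^{(i)} \cong K[x,y]\times K$ (dimension $2$) and $l_{i+1}$ happens to be the idempotent $(1,0)$, then $A^{(i)}/(l_{i+1})\cong K$ has dimension $0$, a drop of $2$.

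The fix is immediate and already implicit in your setup: since $(l_1,\dots,l_{i+1})\subset J$, the ring $A_L$ is a quotient of $A^{(i+1)}$, whence $\dim(A^{(i+1)})\geq n-m$ always. So weaken the induction hypothesis to $n-m\leq \dim(A^{(i)})$: if equality holds you are already done (pad with arbitrary elements of $L$), and if the inequality is strict Lemma~\ref{al} applies and the dimension strictly drops. Since it starts at $n$ and cannot go below $n-m$, it reaches $n-m$ within $m$ steps. Krull is not needed at all, contrary to your closing remark.
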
 
\begin{proof}Induction in m.
\end{proof}


\subsection{A proof of the rough bound}

In this subsection we present a proof of a generalization of the {\it rough bound} from \cite{hr}.

For any subset $\Theta \subset  \mF _q[x_1,\dots ,x_n] $ we denote by $\bX _ \Theta \subset \bA ^n$ the subscheme which is the intersection of zeros of $\theta \in  \Theta$.

\begin{proposition}\label{rb} Let $M\subset \mF _q[x_1,\dots ,x_n]$ be a linear subspace of polynomials of degrees $d$ such that
$\bY :=\bX _{M}$ is of dimension $m$. Then
 $|\bY ( \mF _q)|\leq q^md^c, c:=n-m$.
\end{proposition}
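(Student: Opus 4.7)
My plan is to use Corollary \ref{m} to pass from the defining subspace $M$ (which can have arbitrarily large dimension) to a sequence of just $c$ polynomials cutting out a variety of the same dimension, and then to count points by an iterated Schwartz--Zippel--style slicing argument.

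First, since $\overline{\mF_q}$ is infinite, I apply Corollary \ref{m} to $A = \overline{\mF_q}[x_1,\ldots,x_n]$ and $L = M\otimes_{\mF_q}\overline{\mF_q}$: noting $\dim A - \dim(A/LA) = n - m = c$, the corollary produces polynomials $l_1,\ldots,l_c \in L$, each of degree $\leq d$, such that $Z := V(l_1,\ldots,l_c)$ has dimension exactly $m$. Since each $l_i$ vanishes identically on $\bY$, we obtain $\bY(\mF_q) \subseteq Z(\overline{\mF_q})\cap\mF_q^n$, reducing matters to bounding the latter set. Moreover, every intermediate $Z_k := V(l_1,\ldots,l_k)$ must have dimension exactly $n-k$, since each hypersurface can drop the Krull dimension by at most one while the total drop from $Z_0 = \mA^n$ to $Z_c$ is $c$; in particular, each $l_k$ is not identically zero on any irreducible component of $Z_{k-1}$, giving a dimension-theoretic regular sequence.

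Next I would prove, by induction on $n$, the bound
\[
|V(l_1,\ldots,l_c)(\overline{\mF_q})\cap\mF_q^n| \leq d^c q^{n-c}
\]
under the dimension assumption above. The base case $n = c$ is $0$-dimensional and handled by Bezout's theorem. For the inductive step, slice $Z$ by the coordinate hyperplanes $\{x_n = \alpha\}$ for $\alpha \in \mF_q$: a generic slice has dimension $m-1$ and codimension $c$ in $\mA^{n-1}$, to which the inductive hypothesis gives $\leq d^c q^{m-1}$ points per slice and hence at most $d^c q^m$ points summed over $\alpha$. The exceptional slices --- those $\alpha$ for which $\{x_n=\alpha\}$ contains a top-dimensional component of $Z$ --- are at most $\deg Z \leq d^c$ in number by Bezout applied to the complete intersection $Z$.

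The main obstacle will be controlling these exceptional slices cleanly: each one has dimension $m$ (not $m-1$) in $\mA^{n-1}$ and so satisfies only the weaker inductive bound $d^{c-1}q^m$, and naive addition of generic and exceptional contributions would exceed $d^c q^m$. The fix is either (i) to choose the slicing direction carefully, possibly via a linear change of coordinates over $\overline{\mF_q}$, so that no top-dimensional component of $Z$ is parallel to any hyperplane $\{x_n=\alpha\}$ and every slice is generic; or (ii) to amortize more carefully, using that the $\mF_q$-points of an exceptional slice all lie inside a single top-dimensional irreducible component of $Z$, whose total contribution to $|Z(\overline{\mF_q})\cap\mF_q^n|$ can be estimated directly and already absorbed into the generic count. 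Balancing Bezout (which bounds the number of top-dimensional components) against Schwartz--Zippel (which bounds per-slice counts) to get exactly $d^c q^m$ rather than a looser power is the technical heart of the rough bound from \cite{hr} and its generalization here.
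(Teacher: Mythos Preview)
Your first step --- applying Corollary \ref{m} over $\overline{\mF_q}$ to find $c$ polynomials $l_1,\dots,l_c$ of degree $\leq d$ cutting out a variety $Z \supseteq \bY$ of dimension exactly $m$ --- is correct and matches the paper exactly.

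The gap is in the second half. Your slicing induction does not close, and neither of the two fixes you propose repairs it. Fix (i), a linear change of coordinates over $\overline{\mF_q}$, does not preserve $\mF_q^n$: after such a change the values of the new last coordinate on $\mF_q^n$ need not lie in $\mF_q$ (nor even in a set of size $q$), so slicing by $\{x_n'=\alpha\}$ with $\alpha\in\mF_q$ no longer covers the points you want to count; restricting to $A\in GL_n(\mF_q)$ preserves $\mF_q^n$ but then genericity over $\mF_q$ is not enough to kill components of $Z$ defined only over $\overline{\mF_q}$. Fix (ii) does not balance: a single exceptional slice contributes up to $d^{c-1}q^m$ points, while removing that $\alpha$ from the generic count saves only $d^c q^{m-1}$, and these match only when $q\leq d$.

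The paper avoids the induction entirely. It observes that $\mF_q^n$ is exactly the common zero locus of the $n$ degree-$q$ polynomials $h_j=x_j^q-x_j$. Taking $m$ linear combinations $H_1,\dots,H_m$ of the $h_j$ with coefficients algebraically independent over $\overline{\mF_q}$, one checks inductively that each $H_k$ is nonvanishing on every component of $Z\cap V(H_1,\dots,H_{k-1})$; this holds because the $h_j$ together cut out a $0$-dimensional set, so a generic combination cannot vanish on anything positive-dimensional. Hence $Z\cap V(H_1,\dots,H_m)$ is $0$-dimensional, and a single application of B\'ezout gives at most $d^c\cdot q^m$ points. Since $\bY(\mF_q)\subset Z\cap\mF_q^n\subset Z\cap V(H_1,\dots,H_m)$, the bound follows. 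The idea you are missing is to slice with the degree-$q$ Frobenius hypersurfaces rather than with degree-$1$ hyperplanes: this lets the genericity argument take place over a transcendental extension while the set $\mF_q^n$ being counted is left untouched.
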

\begin{proof} Let $F$  be the algebraic closure of $\mF _q$.

As follows from Lemma \ref{al} there exists $P_i\in M\otimes _{\mF _q}F,1\leq i\leq c$ such 
that $\mathrm{dim}(\bY ')=m$ where $\bY ' = \bX _\mcP$ , $\mcP :=\{ P_i\} _{1\leq i\leq c} $.

\ms

It is clear that  $\bY (\mF _q)$ is the  intersection of $\bY$ with hypersurfaces $S_j, 1\leq j\leq n$ defined by the equations $h_j( x_1,\dots ,x_n)=0$ where 
$h_j( x_1,\dots ,x_n)= x_j ^q-x_j$. Since $\bY \subset \bY '$ we see that  $\bY (\mF _q)$ is contained in the   intersection of $\bY '$ with hypersurfaces $S_j, 1\leq j\leq n$.

For $j=1, \ldots, m$ let  $H_j=\sum _{i=1}^n a_j^ih_i$, $a_j^i \in F'$ be a  linear combination of the $h_j$ such that coefficients $a_j^i $ are algebraically independent over $F$ where $F'/F$ is a transcendental extension. We denote by 
$\bZ _1,\ldots,\bZ _m\subset \bA ^n$ be the corresponding hypersurfaces and define  $\bB _j:= \bY '\cap (\bigcap _{i=1}^j\bZ _i)$. 
\begin{claim}\label{j} Each component $\bC$ of $\bB _j$ is of dimension $m-j$.
\end{claim}
\begin{proof} The proof is by induction in $j$. The statement obviously is true for $j=0$. 

Any component $\bC$ of $\bB _{j+1}$ is a component of an intersection 
$\bC '\cap \bZ _{j+1} $ for some component $\bC '$ of  $\bB _j$. By induction $\mathrm{dim}( \bC ')=m-j$. So  not all the functions $h_j$ vanish on $\bC '$. Hence by the genericity of the choice of linear combinations $\{H_j\}$ we see that 
$H_{j+1}$ does not vanish on $\bC '$ and therefore $  \bC '\cap \bZ _{j+1} $ is 
of pure dimension $m-j-1$.  
\end{proof}

As follows from Claim \ref{j} the intersection
 $\bY '\cap  \bZ _1 \cap \dots \cap  \bZ  _m$ has dimension $0$.  Therefore the  B\'ezout's theorem implies that  $|\bY '\cap  \bZ _1 \cap \dots \bZ _{n-c}|\leq q^md^c$. Since $\bY (\mF _q)=\bY  \cap  \bZ _1 \cap \dots \cap \bZ _n\subset \bX \cap  \bY _1 \cap \dots  \cap \bY _{n-c} $ we see that 
 $|\bY (\mF _q)|
\leq q^m d^c$.
\end{proof}


\section{Proof of Theorem \ref{maint}(1)}
\begin{proof} Let $\fK = \mF _q ,
P: V_1\times \dots \times V_d \to \fK$ 
be a multilinear polynomial and $g= \mathrm{codim}_{\bV _2\times \dots \times \bV _d }  \bZ _P $ and $\bar \fK$ be the algebraic closure of $\mF _q$.

\sms

Since ( see Lemma \ref{ind}) 
$a _\fK(P)= \sum_{i=2}^d \dim (V_i)- \log _q( |\bZ _P(\fK)|)$ it follows from Proposition \ref{rb} that  
$|\bZ _P(\fK)|\leq d^gq^{n-g}$ where $n= \sum_{i=2}^d \dim (V_i )$. So $\log _q( |\bZ _P(\fK)|) \leq g \log _q(d)+n-g$. We see that
$a_\fK(P)\geq g(1-\log _q(d))$. 

Assuming the validity of 
Theorem \ref{Schmidt1}  for multilinear polynomials of degree $d$ over $\bar \fK$  we see that $g
\geq  \frac {r_{\fK}(P)}{ \kk _d  D _d}$ and therefore 
$a(P)\geq  \frac {r_{\fK}(P)}{ \kk _d  D _d}(1-\log _q(d))$.  Theorem \ref{maint}(1) is proven.
\end{proof}
In the next section we prove the second part of Theorem \ref{maint}.


\section{The adaptation of the Schmidt's result for  fields of finite characteristic in the case when $d=3$.}

We use notations from Definitions \ref{11},\ref{d1}. In \cite{S} W. Schmidt proved the following result.
\begin{theorem}[Schmidt]\label{Schmidt}For any $d\geq 2$ there exists 
$D _d$  such that for any complex multilinear  polynomial
 $P: V_1\times \dots \times V_d \to \mC$ we have $r_\mC (P)\leq D_dg$ where $g:=\mathrm{codim}_{\bV_2\times \dots \times \bV_d } \bZ _P $
\end{theorem}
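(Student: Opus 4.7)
The plan is to prove Theorem~\ref{Schmidt} by induction on the degree $d$, following Schmidt's strategy from \cite{S} adapted to the multilinear setting. For the base case $d=2$, a bilinear form $P\colon V_1\times V_2\to\mC$ has Schmidt rank equal to the usual matrix rank (by Gaussian elimination), and $\bZ_P\subset\bV_2$ is precisely the right kernel of the associated linear map. Hence $r_\mC(P)=\operatorname{rank}(P)=g$, and one may take $D_2=1$.

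For the inductive step, I would associate to $P$ the linear map $\Phi_1\colon V_1\to(V_2\otimes\dots\otimes V_d)^\vee$ sending $v_1$ to the $(d-1)$-linear slice $Q_{v_1}(v_2,\dots,v_d):=P(v_1,v_2,\dots,v_d)$, and set $L:=\operatorname{Im}(\Phi_1)$. Choosing a basis $Q_1,\dots,Q_{d_1}$ of $L$ (with $d_1=\dim L$) and writing $P=\sum_{i=1}^{d_1}y_i Q_i$ for $y_i\in V_1^\vee$ dual to a lift of this basis to $V_1/\ker\Phi_1$ shows $r_\mC(P)\le d_1$. Since $\bZ_P$ is by construction the common zero locus of $L$ inside the affine space $\bV_2\times\dots\times\bV_d$, it suffices to prove the key bound $\dim L\le D_d g$ for a linear space of $(d-1)$-linear forms whose common zero locus in $\bV_2\times\dots\times\bV_d$ has codimension $g$.

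To establish this bound I would first apply a generic-selection argument in the spirit of Lemma~\ref{al} (valid over the algebraically closed field $\mC$) to produce $g$ elements $R_1,\dots,R_g\in L$ whose common zero locus equals $\bZ_P$ set-theoretically. This controls the number of \emph{generators} of an ideal cutting out $\bZ_P$ but does not yet bound $\dim L$ itself. To close the gap I would combine (i) a Bertini-type generic hyperplane section on one of the factors $\bV_i$, which reduces the number of variables while controlling both $\dim L$ and $g$, with (ii) the inductive hypothesis applied after freezing an additional coordinate, which replaces $P$ by a multilinear form of strictly smaller degree and comparable complexity. Iterating these two reductions should force $\dim L$ to be bounded by a linear function of $g$, yielding the desired constant $D_d$ via the recursion.

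The main obstacle is precisely this last step: in general a space of multilinear forms whose common zero locus has codimension $g$ can have dimension much larger than $g$, since the ideal generated by such forms need not be radical (the Nullstellensatz only produces vanishing modulo radicals). Separating the ``honest'' defining equations in $L$ from the ``redundant'' ones that still vanish on $\bZ_P$ requires a careful blend of algebraic geometry (Bertini and Nullstellensatz, which crucially use that $\mC$ is algebraically closed) together with the inductive structure on $d$. This is the technical core of Schmidt's argument in \cite{S} and is the reason the constant $D_d$ is allowed to grow significantly with $d$ and is left unspecified.
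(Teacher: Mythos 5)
Your reduction breaks at its first step: the ``key bound'' $\dim L\le D_d\,g$ that you claim it suffices to prove is false. Take $d=3$ and $P(u,v,w)=\sum_{i=1}^n u_iv_1w_i$ on $\mC^n\times\mC^n\times\mC^n$. Then $L=\mathrm{Im}(\Phi_1)$ is spanned by the $n$ linearly independent bilinear forms $v_1w_i$, so $\dim L=n$, while $\bZ_P=\{v_1=0\}\cup\{w=0\}$ has codimension $g=1$; and indeed $r_\mC(P)=1$ since $P=v_1\cdot\sum_i u_iw_i$. So $\dim L$ is not the right quantity to control: it gives only the (correct but far from sharp) inequality $r_\mC(P)\le\dim L$, and no amount of Bertini or Nullstellensatz manipulation can establish an inequality that simply fails. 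Your closing paragraph senses a difficulty here but misdiagnoses it as an issue of non-radical ideals; the real point is that the Schmidt rank can be much smaller than $\dim L$, so one must bound $r_\mC(P)$ directly by exhibiting subspaces on which $P$ vanishes rather than by bounding $\dim L$. Beyond this, you explicitly leave the technical core unproved, so even granting the (false) reduction the argument is a sketch, not a proof.

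For comparison, the paper's argument (written out for $d=3$, with $D_3=2$; for general $d$ it simply cites Schmidt) proceeds quite differently: pick a top-dimensional component $Z$ of $Z_P$ and a generic point $(v_0,w_0)\in Z$; let $S_U,S_V$ be the left and right kernels of the bilinear slice $P_{w_0}=P(\cdot,\cdot,w_0)$ and let $S_W$ be the tangent space at $w_0$ to the projection of $Z$ to $W$. A first-order deformation lemma (Claim \ref{0}: if the rank of a family of bilinear forms does not increase along a curve, the derivative of the family vanishes on the product of the kernels) gives $P_{|S_U\times S_V\times S_W}\equiv 0$, a dimension count via Claim \ref{tr} gives $\codim(S_U)+\codim(S_V)+\codim(S_W)\le 2g$, and Claim \ref{3} converts this into $r(P)\le 2g$. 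Only your base case $d=2$ and the inequality $r_\mC(P)\le\dim L$ survive scrutiny.
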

His proof extends to any algebraically closed field, and we sketch it here for the three-dimensional case.

\ms

In this section we fix an algebraically closed field $\fK$ and write $r(P)$ instead of $r_\fK (P)$. Let $\bX$ be an algebraic $\fK$-variety.
Since our field $\fK$ is algebraically closed any  constructible subset 
 $Y\subset X$ defines uniquely a subset $\bY \subset \bX$ such that $Y=\bY (\fK)$. We say that a constructible subset  $Y\subset X$ is open if the subset $\bY \subset \bX$ is open and we  define $\dim(Y):= \dim(\bY)$.

\ms

For a trilinear polynomial  $P: U\times  V\times W \to \fK$ we denote by $Z_P\subset V\times W$  the 
constructible subset of points $(v,w)$ such that $P(u,v,w)=0$ for all $u\in U$.

The main goal of this section is to prove the following theorem:
\begin{theorem} \label{Schmidt(f)}
For any trilinear polynomial $P: U \times  V\times W \to \fK $
we have $r(P)\leq 2g$ where $g:=\mathrm{codim}_{V\times W} Z _P $.
\end{theorem}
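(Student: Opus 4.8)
The plan is to prove Theorem~\ref{Schmidt(f)} by induction on $g = \codim_{V\times W} Z_P$, following Schmidt's geometric argument adapted to an algebraically closed field $\fK$ of characteristic $\geq 3$. The base case $g=0$ is immediate: if $Z_P = V\times W$ then $P \equiv 0$ and $r(P)=0$. For the inductive step, assume $g \geq 1$, so $Z_P$ is a proper subvariety of $V\times W$. The idea is to pass to a generic hyperplane section: choose a generic vector $w_0 \in W$ and consider the trilinear form $P'$ on $U \times V \times (W/\langle w_0\rangle)$ obtained by restricting to the quotient, or rather work with a generic codimension-one subspace $W' \subset W$ and the restricted form $P|_{U\times V\times W'}$. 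A dimension count shows that $\codim_{V\times W'} Z_{P|_{U\times V\times W'}}$ drops by at most one (and generically by exactly one if $Z_P$ is not a cylinder in the $W$-direction), so by induction the restricted form has Schmidt rank $\leq 2(g-1)$; one then needs to relate $r(P)$ to $r(P|_{U\times V\times W'})$ plus a bounded correction term coming from the single "missing" direction.

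Concretely, the key algebraic step is the following: writing $P$ in coordinates with respect to a basis $w_0, w_1, \dots$ of $W$ adapted to $W' = \langle w_1, \dots\rangle$, we have $P(u,v,w) = \ell_{w_0}(u,v)\cdot x_0 + P'(u,v,w')$ where $x_0$ is the $w_0$-coordinate of $w$, $\ell_{w_0}$ is the bilinear form $P(u,v,w_0)$, and $P'$ is the restriction to $W'$. The term $\ell_{w_0}(u,v)\cdot x_0$ is manifestly a product of polynomials of degree $<3$, contributing at most $1$ (in fact exactly $r_\fK(\ell_{w_0}) \le$ something, but as a single slice term it costs $1$ since $x_0$ has degree $1$ — actually we need a bit more care because $\ell_{w_0}$ has rank possibly larger than $1$, but $\ell_{w_0}(u,v)\cdot x_0$ as written is already a single product $Q\cdot R$ with $Q = \ell_{w_0}$ of degree $2$ and $R = x_0$ of degree $1$). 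Hence $r(P) \leq r(P') + 1$. Combined with $r(P') \leq 2(g-1) = 2g-2 < 2g$ from induction, this would actually give the stronger bound $r(P) \leq 2g-1$; the factor $2$ in the statement suggests the actual induction must be subtler — the codimension of $Z_{P'}$ may fail to drop, or may drop by the "wrong" amount, forcing a case analysis where sometimes we peel off a $V$-direction instead, or where the generic slice does not decrease $g$ at all and a different reduction (e.g. restricting $U$ or using the variety structure of $Z_P$ directly) is needed.

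The main obstacle, therefore, is controlling how $\codim Z_P$ behaves under generic hyperplane sections and handling the case where it does \emph{not} decrease. When $Z_P \subset V\times W$ is a proper subvariety but, say, every fiber of the projection $Z_P \to W$ is all of $V$ (i.e. $Z_P = V \times Z_P''$ for $Z_P'' \subsetneq W$), slicing $W$ does reduce $g$ as expected; the bad case is the opposite extreme, where $Z_P$ projects onto $W$ with proper fibers, and here one instead slices $V$. Schmidt's original argument handles this by splitting into the two projections and arguing that at least one of the two coordinate reductions makes progress, at a cost of at most $2$ per unit of codimension removed — this is precisely where the constant $D_3 = 2$ enters. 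So the proof structure I would write is: (i) reduce to the case $\dim V, \dim W \geq 1$ and $g \geq 1$; (ii) by genericity, after a linear change of coordinates, assume $P(u,v,w) = x_0\,\ell(u,v) + P'(u,v,w')$ with $w' \in W' = W/\langle w_0\rangle$ as above; (iii) show $\codim_{V\times W'} Z_{P'} \geq g-1$ using that $Z_{P'} = Z_P \cap (V\times W')$ under the identification, with a symmetric fallback slicing $V$ when the $W$-slice is degenerate; (iv) conclude $r(P) \le r(\ell) \cdot [\text{slice cost}] + r(P') $, bound $r(\ell) \cdot 1 \le 1$ interpreting $x_0 \ell$ as a single product, giving $r(P) \le 2 + 2(g-1) = 2g$ after accounting for the worst case where two reductions are needed; and (v) check the characteristic hypothesis $\mathrm{char}\,\fK \geq 3$ is used only where Schmidt's argument invokes differentiation of cubic forms (the associated symmetric trilinear form must be recoverable, which requires $3$ invertible). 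I would flag step (iii)–(iv) — the precise bookkeeping of which direction to slice and why the cost is exactly $2$ — as the heart of the matter, and defer to Schmidt \cite{S} for the parts of the argument that are characteristic-free and carry over verbatim.
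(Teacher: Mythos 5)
Your proposal has a genuine gap at precisely the step you flag as ``the heart of the matter,'' and the gap is not just unfinished bookkeeping: the inductive mechanism itself does not work. If $W'\subset W$ is a \emph{generic} hyperplane, then $Z_{P|_{U\times V\times W'}}=Z_P\cap(V\times W')$, and a generic hyperplane section preserves codimension rather than decreasing it: both $\dim(V\times W)$ and $\dim Z_P$ drop by exactly $1$ (no component of $Z_P$ lies inside a generic $V\times W'$), so $\codim_{V\times W'}Z_{P'}=g$, not $g-1$. Hence the recursion $r(P)\le r(P')+1$ never reduces $g$; iterating it only yields the trivial bound $r(P)\le\dim W$. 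The codimension can only drop if the hyperplane is chosen to \emph{contain} a component of $Z_P$, which is the opposite of generic and destroys the clean decomposition $P=x_0\,\ell+P'$ you rely on. You correctly sensed that something was off (your accounting would give $2g-1$), but the resolution is not a subtler case analysis of which factor to slice; it is that no hyperplane-slicing induction of this shape makes progress. Also, your remark about $\mathrm{char}\,\fK\ge 3$ being needed to recover a symmetric trilinear form from a cubic is not relevant here, since $P$ is already given as a trilinear form on $U\times V\times W$.

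The paper's proof is direct, not inductive. Let $Z$ be a component of $Z_P$ of maximal dimension, $(v_0,w_0)$ a generic point of $Z$, and $Y\subset W$ the projection of $Z$ to $W$. One sets $S_U,S_V$ to be the left and right kernels of the bilinear form $P_{w_0}=P(\cdot,\cdot,w_0)$ and $S_W:=T_Y(w_0)$, the tangent space to $Y$ at $w_0$. Genericity of $w_0$ gives $r(P_y)\le r(P_{w_0})$ for $y\in Y$, and a rank-semicontinuity/derivative argument along curves in $Y$ through $w_0$ (Claim \ref{0}) shows $P|_{S_U\times S_V\times S_W}\equiv 0$. The codimension count is then: $\codim_U S_U=\codim_V S_V=r(P_{w_0})$, while the fibration $Z\to Y$ with fiber over $w_0$ contained in $S_V$ gives $\codim_V S_V+\codim_W S_W\le g$; summing yields $\codim S_U+\codim S_V+\codim S_W\le 2g$, and Claim \ref{3} (slice rank as minimal total codimension of a vanishing product of subspaces) converts this into $r(P)\le 2g$. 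If you want to salvage your write-up, replace the induction by this one-shot construction of the three subspaces.
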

Before proving Theorem \ref{Schmidt(f)} we remind the reader of some results from Algebra.
\subsection{ Linear Algebra}

\begin{definition}For a  bilinear form $B: U\times V\to \fK$ we write
\begin{enumerate}
\item 
$S_U(B):=\{u\in U| B(u,v)=0, \forall v\in V\}$.
 \item $S_V(B):=\{v\in V| B(u,v)=0, \forall u\in U\}$. 
\end{enumerate}
\end{definition}
\begin{remark}$r(B)= \codim_US_U(B) = \codim_VS_ V(B)$.
\end{remark}

Let $X$ be a smooth curve over $\fK$ , let $t_0\in X$ and let $B(t): U\times V\to \fK , t\in X $ be a family of bilinear forms. We write $B:=B(t_0)$, $S_U:= S_U(B)$ , $S_V:= S_V(B)$, and $C:=(\partial B(t)/ \partial t) _{t=t_0}$.

\begin{claim}\label{0} If $r(B(t))\leq r(B), t\in X$ then 
$C_{|S_U\times S_V} \equiv 0 $.
\end{claim}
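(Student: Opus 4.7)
The plan is to localize around $t_0$, block-diagonalize $B$ using a Schur complement, and extract the claim from a first-order expansion. Set $r:=r(B)$ and choose direct sum decompositions $U=U'\oplus S_U$, $V=V'\oplus S_V$ with $\dim U'=\dim V'=r$. In these bases $B$ has block form with an invertible $r\times r$ upper-left block $A$ and zeros elsewhere, and writing $B(t)$ in the same bases yields
\[
B(t)=\begin{pmatrix} A(t) & E(t) \\ F(t) & G(t) \end{pmatrix},
\]
with $A(t_0)=A$, $E(t_0)=0$, $F(t_0)=0$, $G(t_0)=0$. The key observation is that $C_{|S_U\times S_V}$ is exactly $G'(t_0)$, so the claim reduces to showing $G'(t_0)=0$.

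Next I would pass to the local ring $\mcO_{X,t_0}$, a DVR since $X$ is smooth. Because $\det A(t_0)\neq 0$, the matrix $A(t)$ is invertible over this local ring, so the unimodular Schur-complement reduction
\[
\begin{pmatrix} I & 0 \\ -F(t)A(t)^{-1} & I\end{pmatrix} B(t) \begin{pmatrix} I & -A(t)^{-1}E(t) \\ 0 & I\end{pmatrix}=\begin{pmatrix} A(t) & 0 \\ 0 & G(t)-F(t)A(t)^{-1}E(t) \end{pmatrix}
\]
preserves all minors. The hypothesis $r(B(t))\leq r$ at every $\fK$-point of $X$, together with the reducedness of $X$, forces the $(r+1)\times(r+1)$ minors of $B(t)$ to vanish in $\mcO_{X,t_0}$. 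Since each entry of the Schur complement appears, up to the unit factor $\det A(t)$, as such an $(r+1)\times(r+1)$ minor of the block-diagonal form on the right, this yields the identity $G(t)=F(t)A(t)^{-1}E(t)$ in $\mcO_{X,t_0}$.

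Finally, differentiating this identity at $t_0$ via the Leibniz rule: every term on the right carries either $E(t_0)$ or $F(t_0)$ as a factor, both of which vanish, so $(FA^{-1}E)'(t_0)=0$ and hence $G'(t_0)=0$. The main technical point is the legitimacy of the Schur-complement reduction, but smoothness of $X$ together with $\det A(t)$ being a unit in $\mcO_{X,t_0}$ makes this routine. The substantive content is first-order: the simultaneous vanishing of the off-diagonal blocks $E$ and $F$ at $t_0$ is precisely what makes the bilinear correction $FA^{-1}E$ vanish to first order, exposing $G'(t_0)$ as the sole obstruction to the rank of $B(t)$ exceeding $r$.
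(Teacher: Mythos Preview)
Your proof is correct and follows essentially the same approach as the paper's: choose bases adapted to $S_U$ and $S_V$, use the rank hypothesis to force the $(r+1)\times(r+1)$ minors of $B(t)$ to vanish identically on $X$, and read off the first-order consequence at $t_0$. The paper argues by contradiction on a single entry, picking one pair $(i_0,j_0)$ with $C(e_{i_0},f_{j_0})\neq 0$ and expanding the corresponding $(r+1)\times(r+1)$ determinant directly as $tC(e_{i_0},f_{j_0})\pmod{t^2}$; you instead package all entries at once via the Schur complement identity $G=FA^{-1}E$ in $\mcO_{X,t_0}$ and differentiate. Your version is slightly more systematic and makes the second-order vanishing of the correction term transparent, while the paper's is more elementary and avoids invoking the local ring; but the mathematical content is the same. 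One small wording issue: the unimodular reduction preserves the \emph{ideal} of $(r+1)\times(r+1)$ minors (via Cauchy--Binet), not the individual minors, though this is all you need.
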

\begin{proof} We show that the assumption that $C_{|S_U\times S_V} \not \equiv 0 $ leads to a contradiction.
To simplify notations we assume that $\bX =\bA$ and $t_0=0$.

To start with we choose bases $e_i,f_j$, where $1\leq i\leq \dim(U), 1\leq j\leq \dim(V)$ of $U$ and $V$ such that 
\begin{enumerate}
\item $B(e_i,f_j)= \delta _{i,j}, 1\leq i,j\leq r(B)$. 
\item $B(e_i,f_j) =0$ if either $i>r(B)$ or $j>r(B)$.  

\end{enumerate}

Then $S_U$ is the span of $\{e_i\}$ , where $r(B)+1\leq i\leq \dim(U) $ and $S_V$ is the span of $\{f_j\}$, where $r(B)+1\leq j\leq \dim(V) $. 
Since   $C_{|S_U\times S_V} \not \equiv 0 $ there exist a pair $(i_0,j_0)$  such that $i_0,j_0>r(B) $ and $C(e_{i_0}, f_{j_0})\neq 0$. 

Let $U'\subset U$, $\dim (U')=r(B)+1$ be the span of $\{e_i\}_{1\leq i\leq r(B)} $ and of $e_ {i_0} $ and $V'\subset V$, $\dim (V')=r(B)+1$ be the span of $\{f_j\},_{1\leq j\leq r(B)} $ and of $f_ {j_0} $. 

We denote by  $\D (t)$ be the determinant of the 
bilinear form  of $B(t)_{U'\times V '}$ with  respect to chosen bases of $U'$ and $V'$. The condition $r(B(t))\leq r(B),t\in \fK$ implies that $\D (t)\equiv 0$. On the other hand 
$\D (t)\equiv t C(e_{i_0}, f_{j_0}) \mod (t^2)$. 

This contradiction proves Lemma \ref{0}.

\end{proof}

\subsection{Transcendence degree}
\begin{definition}
\begin{enumerate}
\item For a field extension $K/\fK$ we define the transcendence degree $tr (K/\fK)$ to be  the minimal  $n\geq 0$ such that there is no imbeddings of  $\fK (u_0,u_1,\dots ,u_n)\ho K$.
\item For a point $w \in K^N$ we denote by $\fK (w)\subset K$ the subfield generated by $\fK$ and the coordinates of $w$  and write $tr _\fK (w):= tr (\fK (w)/\fK) $.
\item Let $\bX \subset \bA ^N$ be an irreducible $\fK$-subvariety.  A point 
$w\in \bX (K)$ is generic if it is not contained in any proper  $\fK$-subvariety of $X$.
\end{enumerate}
\end{definition}
\begin{claim}\label{tr} 
\begin{enumerate} 
\item Let $\bX \subset \bA ^M$ be an irreducible $\fK$ variety and $w \in \bX (K) $ be generic point. Then $tr (\fK(w)/\fK)=\dim(X)$.
\item Let $p:  \bA ^ M\to \bA ^ N$ be a $\fK$-linear map, $\bZ \subset \bA ^M$ an irreducible variety, and $\bY =p(\bZ)$. Let $v\in \bZ (K) $ be generic point and $w:=p(v) \in \bY (K) $.
Then $w\in \bY (K) $ is a generic point,  $\fK(w) \subset \fK(v) $ and 
 $tr (\fK(v)/\fK(w) )= \dim(\bZ) -\dim(\bY)$.
\end{enumerate}
\end{claim}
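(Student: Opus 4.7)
The plan is to derive both parts of Claim \ref{tr} from the standard identification of the residue field at a generic point with the function field of the variety, combined with the additivity of transcendence degree in towers of field extensions.

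For part (1), I would consider the evaluation $\fK$-algebra homomorphism $\phi_w \colon \fK[x_1,\dots,x_M]\to K$ sending $x_i$ to the $i$-th coordinate of $w$. The key observation is that $\ker(\phi_w)$ equals the prime ideal $I(\bX)$: any $\fK$-polynomial annihilating $w$ cuts out a $\fK$-closed subvariety of $\bA^M$ containing $w$, and by genericity such a subvariety must contain $\bX$, so its defining polynomial lies in $I(\bX)$. The opposite inclusion is immediate. Consequently $\phi_w$ induces an injection $\fK[\bX]\hookrightarrow K$; passing to fraction fields identifies $\fK(w)$ with $\fK(\bX)$, and the equality $tr(\fK(w)/\fK)=\dim(\bX)$ is the standard characterization of the dimension of an irreducible variety as the transcendence degree of its function field over $\fK$.

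For part (2), the inclusion $\fK(w)\subset\fK(v)$ is immediate since the coordinates of $w=p(v)$ are $\fK$-linear combinations of the coordinates of $v$. To show that $w$ is generic in $\bY$, I would argue by contradiction: if $\bY'\subsetneq\bY$ is a proper $\fK$-closed subvariety containing $w$, then $v$ lies in the $\fK$-closed subset $\bZ\cap p^{-1}(\bY')$; since $\bY$ is the Zariski closure of $p(\bZ)$, we have $p(\bZ)\not\subset\bY'$, so $\bZ\cap p^{-1}(\bY')$ is a proper subvariety of $\bZ$, contradicting the genericity of $v$. The transcendence-degree identity then drops out of applying part (1) to $v\in\bZ(K)$ and to $w\in\bY(K)$, together with the additivity $tr(\fK(v)/\fK)=tr(\fK(v)/\fK(w))+tr(\fK(w)/\fK)$ for the tower $\fK\subset\fK(w)\subset\fK(v)$.

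I do not anticipate any serious obstacle: the argument is essentially bookkeeping with the definitions of generic point and function field. The only mild point to keep in mind is the convention that $\bY=p(\bZ)$ denotes the Zariski closure of the image, which for a $\fK$-linear map preserves irreducibility and supplies the density needed in the genericity step of part (2).
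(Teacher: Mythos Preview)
Your argument is correct and is exactly the standard justification one would give. Note that the paper itself does not supply a proof of this claim; it is stated as a reminder of basic facts about transcendence degree and generic points, so there is no authorial proof to compare against.
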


\subsection{Proof of Theorem \ref{Schmidt(f)}}
\begin{proof} Let  $P: U \times  V\times W \to \fK $ be as in Theorem 
\ref{Schmidt(f)} and $Z\subset Z_P$ be an irreducible component of the maximal 
dimension, $g:= \codim _{V\times W}Z$. We have to show that $r(P)\leq 2g$.

 Let $z_0=(v_0,w_0)$ be a generic point of $Z$, and let $Y\subset W$ be  the projection  of $Z$ on $W$. For $w\in W$ we denote by $P_w$ the  bilinear forms on $V$ given by
 $P_w(u,v): = P(u,v,w)$. 

Let  $S_U=\{u\in U| P_{w_0}(u,v)=0, \forall v\in V\}$,
 and let $S_V=\{v\in V| P_{w_0}(u,v)=0, \forall u\in U\}$. Since $w_0$ is a generic point of $Y$, we see that $r(P_y)\leq r(P_{w_0}) $ for all $y\in Y$.
Since $w_0\in Y$ is a generic point,  $Y$ is smooth at $w_0$ and we can define $S_W:= T_Y(w_0)$ the tangent space at $w_0$.. 

 It follows from 
 Calim \ref{3} that the following statement implies the validity of Theorem \ref{Schmidt(f)}.

\begin{proposition}\label{Z}
\begin{enumerate}
 \item $P_{| S_U\times S_V\times S_W}\equiv 0$.
\item $\codim_U(S_U)+\codim_W(S_W)+\codim_V(S_V)\leq 2g$. 
\end{enumerate}
\end{proposition}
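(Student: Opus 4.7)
The plan is to establish (1) and (2) separately. For (1), I will apply Claim \ref{0} along smooth curves in $Y$ whose tangent vectors at $w_0$ sweep out $S_W = T_{w_0}Y$. For (2), I will bound the three codimensions in terms of $g$ via a standard dimension computation for the projection $\pi: Z \to Y$, exploiting the containment of each fiber of $\pi$ in the corresponding linear fiber of the full incidence variety $Z_P$.

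To prove (1), fix a tangent vector $w \in S_W$. Since $w_0$ is a generic, hence smooth, point of the irreducible variety $Y$, there exist a smooth $\fK$-curve $X$, a marked point $t_0\in X$, and a $\fK$-morphism $\phi: X \to Y$ with $\phi(t_0) = w_0$ and $(d\phi)_{t_0}(\partial/\partial t) = w$. Define the family of bilinear forms $B(t) := P_{\phi(t)}$ on $U \times V$. As noted in the preamble to the proposition, $r(B(t)) = r(P_{\phi(t)}) \leq r(P_{w_0}) = r(B(t_0))$ for every $t \in X$. Claim \ref{0} then implies $C_{|S_U \times S_V} \equiv 0$ for $C := \partial_t B(t)|_{t_0}$. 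Since $P$ is linear in its third argument, $C(u, v) = P(u, v, w)$, so $P(u, v, w) = 0$ whenever $(u, v) \in S_U \times S_V$. As $w \in S_W$ was arbitrary and $P$ is linear in $w$, this proves $P_{|S_U \times S_V \times S_W} \equiv 0$.

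For (2), set $a := r(P_{w_0})$, so that $\codim_U S_U = \codim_V S_V = a$, and $c := \codim_W S_W = \dim W - \dim Y$. The projection $\pi: Z \to Y$ is dominant; for each $w \in Y$, its fiber $\pi^{-1}(w)$ is contained in $S_V(P_w) \times \{w\}$ and hence has dimension at most $\dim V - r(P_w)$. Taking $w = w_0$, the generic fiber of $\pi$ has dimension at most $\dim V - a$, so
\[ \dim Z \leq \dim Y + \dim V - a, \]
and therefore
\[ g = \dim V + \dim W - \dim Z \geq (\dim W - \dim Y) + a = a + c. \]
In particular $a \leq a + c \leq g$, and consequently
\[ \codim_U S_U + \codim_V S_V + \codim_W S_W = 2a + c = (a + c) + a \leq g + g = 2g, \]
which is (2). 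Combined with (1), Claim \ref{3} yields $r(P) = s(P) \leq 2g$, finishing the proof of Theorem \ref{Schmidt(f)}.

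I expect the dimension computation in (2) to be the only non-automatic step: $Z$ is merely an irreducible component of $Z_P$, so its generic fiber over $Y$ could a priori be strictly smaller than $S_V(P_{w_0})$, which is why only the one-sided inequality $g \geq a + c$ is available; fortunately this is exactly what is needed. The remaining ingredients (lower semicontinuity of bilinear rank via vanishing of $(r+1)\times(r+1)$ minors, realizability of every tangent vector at a smooth point by a smooth curve, and the fiber-dimension formula for dominant morphisms) are standard in arbitrary characteristic.
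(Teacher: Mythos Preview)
Your proof is correct and follows essentially the same approach as the paper: part (1) is argued identically via smooth curves through $w_0$ and Claim~\ref{0}, and part (2) is the same generic-fiber bound $\dim Z - \dim Y \le \dim V - r(P_{w_0})$, which the paper phrases through the transcendence-degree statement Claim~\ref{tr} rather than the fiber-dimension theorem. Your final arithmetic $2a+c=(a+c)+a\le g+g$ is exactly the paper's deduction from $\codim_V(S_V)+\codim_W(S_W)\le g$.
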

\begin{proof}  
To show that  $P_{| S_U\times S_V\times S_W}\equiv 0$ we have to show that for 
any $C\in S_W$ the restriction of the bilinear form $P_C$ on $ S_U\times S_V $ is 
equal to $0$. So we fix  $C\in S_W$. Since $P_w$  for $w\in W$ is a linear function on 
$W$ we have  $P_C= (\partial P_w/\partial C) _{w=w_0}$. So we have to show that 
$(\partial P_w/\partial C) _{ w=w_0} |_{S_U\times S_V} \equiv 0 $.

Choose a smooth curve on $Y$ passing through $w_0$ and tangent to $C$. In other words choose a map $\phi  : X\to Y$ of a smooth curve 
$X$ to $Y$ and a point $t_0\in X$ such 
such that $\phi  (t_0)=w_0$ and $C=(\partial \phi(t)/ \partial t) _{t=t_0}$. Since $r(P_y)\leq r(P_{w_0)}$ for $y\in Y$ the family $B(t):= P_{\phi (t)}$ of bilinear forms on 
$U\times V$ satisfies the assumption of  Claim \ref{0}.  Therefore  $(\partial P_w/\partial C)_{ w=w_0}| _{S_U\times S_V} \equiv 0 $, and thus 
$P_C |_{S_U\times S_V} \equiv 0$.

\sms

Since $\codim _U(S_U)=\codim _V(S_V)= r(P_{w_0})$ it follows from Claim \ref{tr} that $\codim_U(S_U)+\codim_W(S_W)= \codim_V(S_V)+\codim_W(S_W)\leq g$. So 
$\codim_U(S_U)+\codim_W(S_W)+\codim_V(S_V)\leq 2g$. 
\end{proof}
\end{proof}

\end{document}